\documentclass[11pt]{article}

\usepackage[latin1]{inputenc}
\usepackage{t1enc}
\usepackage{amsmath, amssymb, amsfonts, amsthm, amsopn,epsfig}
\usepackage[none]{hyphenat}
\usepackage{float}
\usepackage{graphicx}
\usepackage{caption}
\usepackage[toc,page]{appendix}
\usepackage{epstopdf}
\usepackage{etoolbox}
\usepackage[colorlinks=true]{hyperref}
\usepackage{dsfont}
\hypersetup{
	linkcolor=blue,
	filecolor=magenta,
	urlcolor=cyan,
	citecolor=blue
}
\usepackage{cleveref}
\usepackage[top=20mm, bottom=21mm, left=21mm, right=21mm]{geometry}
\usepackage{comment}

\usepackage{tikz}
\usepackage[dvipsnames]{xcolor} 
\usetikzlibrary{math,calc,intersections, scopes}
\usetikzlibrary{positioning,arrows,shapes,decorations.markings,decorations.pathreplacing, decorations.pathmorphing,matrix,patterns}
\tikzstyle{vertex}=[circle,draw=black,fill=black,inner sep=0,minimum size=5pt,text=white,font=\footnotesize]
\tikzstyle{redvertex}=[circle,draw=red,fill=red,inner sep=0,minimum size=5pt,text=white,font=\footnotesize]
\definecolor{amber}{rgb}{1.0, 0.75, 0.0}
\definecolor{darkgreen}{rgb}{0.18, 0.7, 0.46}

\theoremstyle{plain}
\newtheorem{theorem}{Theorem}[section]

\newtheorem{proposition}[theorem]{Proposition}
\newtheorem{question}[theorem]{Question}

\theoremstyle{definition}
\newtheorem{definition}{Definition}

\newcommand{\N}{\mathbb{N}}

\renewcommand{\epsilon}{\ensuremath\varepsilon}

\renewcommand{\phi}{\ensuremath{\varphi}}

\newcommand{\eradograph}[1]{\operatorname{ER}(#1)}
\newcommand{\erado}[2]{\operatorname{ER}_{#1}(#2)}
\newcommand{\hyperado}[2]{\operatorname{ER}^{(#1)}(#2)}
\newcommand{\offerado}[3]{\operatorname{ER}(#1,#2,#3)}

\newcommand{\offram}[2]{\operatorname{R}(#1,#2)}

\newcommand{\unorderado}[2]{\operatorname{CR}(#1,#2)}

\newcommand{\cmpltt}[3]{K^{(#1)}_{#2,\dots,#3}}
\newcommand{\cmplt}[2]{\cmpltt{#1}{#2}{#2}}
\newcommand{\bicmpltt}[2]{K_{#1,#2}}
\newcommand{\bicmplt}[1]{\bicmpltt{#1}{#1}}
\newcommand{\tricmplt}[1]{K^{(3)}_{#1,#1,#1}}
\newcommand{\kkt}[2]{K^{(#1)}_{#2}}

\newcommand{\expect}{\mathbb{E}}
\newcommand{\prob}{\mathbb{P}}
\newcommand{\one}{\mathbf{1}}

\newcommand{\hide}[1]{}

\title{
\vspace{-0.8cm}
Sharp bounds for off-diagonal and tripartite canonical Ramsey numbers}
\author{Strahinja Gvozdi\'c\thanks{ETH Zurich, e-mail: \textbf{sgvozdic@ethz.ch}}, Zach Hunter\thanks{ETH Zurich, e-mail: \textbf{\{zach.hunter, aleksa.milojevic, benjamin.sudakov\}@math.ethz.ch}. Research supported in part by SNSF grant 200021-228014.}, Aleksa Milojevi\'c\footnotemark[2],  Benny Sudakov\footnotemark[2]}
\date{}

\begin{document}

\maketitle

\begin{abstract}
The canonical Ramsey theorem establishes that for every positive integer $t$, there is a sufficiently large $n$ such that every edge-colouring of a complete graph $K_n$ contains a copy of $K_t$ that is canonically coloured, i.e. monochromatic, rainbow, or lexicographic. In this paper we investigate two variations of this theorem.

First, we study canonical Ramsey numbers in the multipartite hypergraph setting. We prove that in every edge-colouring of the 3-uniform complete hypergraph on at least $t^{151t^2}$ vertices, there always exists a canonically coloured copy of $K_{t, t, t}^{(3)}$. This estimate is sharp up to the constant in the exponent.

Second, we explore off-diagonal canonical Ramsey numbers. Let $\offerado abc$ denote the minimum number of vertices $n$ required to guarantee a monochromatic $K_a$, a lexicographic $K_b$, or a rainbow $K_c$ in every edge-colouring of the complete graph $K_n$. We establish sharp bounds for these numbers across different parameter regimes. Specifically, we prove that $\offerado abc\ge c^{\Omega(ab)}$ when $c$ is sufficiently large relative to $a$, and that $\offerado abc\le 2^{O_b(a)}$ when $b\ge 4$ is a fixed constant and $c\le a$. Finally, we analyse the behaviour of the function when avoiding lexicographic triangles (i.e. $b=3$), showing that $\offerado a3c\le (a-1)(c-2)+O(c^6)$.
\end{abstract}

\section{Introduction}\label{sec-intro}

For a positive integer $n$, denote by $K_n$ the complete graph on $n$ vertices. The classical theorem of Ramsey then states that for all positive integers $s$ and $t$, every colouring of edges of $K_n$ in red and blue, for $n$ large enough, contains a red copy of $K_s$ or a blue copy of $K_t$. The statement even remains true for any fixed number of colours. Given integers $t_1,\dots,t_k$, let us denote by $\operatorname{R}(t_1,\dots,t_k)$ the minimum $n$ such that any colouring of $E(K_n)$ in $k$ colours must contain a monochromatic $K_{t_i}$ in the $i$-th colour for some $i$. It has been one of the central questions of combinatorics for a long time to determine the asymptotic behaviour of $\operatorname{R}(t_1,\dots,t_k)$, especially when $k=2$ or $t_1=\dots=t_k$.

In this regard, the current state of the art has it that $(1+o(1))\frac{\sqrt{2}t}{e}2^{t/2}\leq \operatorname{R}(t,t)\le (4-\epsilon)^t$ for some small absolute constant $\epsilon>0$, where the lower and upper bounds are due to Spencer \cite{spencer1977asymptotic} and Campos, Griffiths, Morris, and Sahasrabudhe \cite{campos2026exponential}, respectively. Similarly, for multicolour diagonal Ramsey numbers, it is known that $2^{\Omega(kt)}\le \operatorname{R}(k;t):=\operatorname{R}(\underbrace{t,\dots,t}_k)\le e^{-\Omega_k(t)}k^{kt}$, as shown by Lefmann \cite{lefmann1987note} and Balister, Bollob\'as, Campos, Griffiths, Hurley, Morris, Sahasrabudhe, and Tiba \cite{balister2026upper}, respectively. Note that the implied constant in the lower bound has since been improved by Conlon and Ferber \cite{conlon2021lower}, Wigderson \cite{wigderson2021improved}, and Sawin \cite{sawin2022improved}. See also \cite{OpenAI} for a very recent improvement, showing that $\operatorname{R}(k; 3)\geq k^{(1-o(1))k/3}$.

On the other hand, without the number of colours fixed beforehand, the statement of Ramsey's theorem easily fails, as one could simply assign each edge a distinct colour. It is, however, still possible to obtain meaningful results in this setting, and one way to do so was first observed by Erd\H{o}s and Rado in their seminal work \cite{erdos1950combinatorial}, where they show the so-called canonical Ramsey theorem. As we have observed, in order to prove anything, one must allow for finding a rainbow clique $K_t$ besides a monochromatic clique, since the starting colouring could be rainbow. Moreover, there is one more colouring pattern of $K_t$, in addition to monochromatic and rainbow patterns, which one must consider.

\begin{definition}\label{def-erado}
    A graph $G$ on $n$ vertices is \emph{lexicographically} coloured under the colouring $\chi:E(G)\to\N$ if there is an ordering of its vertices $v_1<\dots<v_n$ such that for all $i$, all $v_i v_j$ with $j>i$ have the same colour, and these colours are distinct for different $i$. A graph $G$ is \emph{rainbow} coloured under $\chi$ if all edges of $G$ have pairwise distinct colours.
\end{definition}

Why are these three colouring patterns special? Observe that a subgraph of a lexicographically coloured graph is still lexicographically coloured, and the same is true for the other two patterns. In other words, these patterns are the only ones with the property that all of their subgraphs have the exact same pattern. With this in mind, it is natural that the statement of the canonical Ramsey theorem fails whenever one of the three patterns is excluded.

Another way to characterise the three special patterns is the following: just as lexicographic colouring has edge colours determined solely by their lowest-indexed (or highest-indexed) vertex in some ordering, edge colours in monochromatic and rainbow colourings can be seen as determined by neither endpoint or by both endpoints, respectively. Therefore, these patterns cover all possibilities when it comes to subsets of vertices of an edge that should determine its colour. As we will see, this perspective will be especially useful for generalising the notion of canonical colourings to hypergraphs.

For a positive integer $t$, let $\eradograph{t}$ be the smallest integer $n$ such that any colouring of $K_n$ admits a monochromatic, lexicographic, or rainbow copy of $K_t$. The number $\eradograph{t}$ is well-defined, as shown by Erd\H{o}s and Rado \cite{erdos1950combinatorial}, after whom it is often called an Erd\H{o}s-Rado or a canonical Ramsey number. Note that the canonical Ramsey theorem was originally shown in the ordered setting, when vertices of $K_n$ have a pre-assigned ordering. In this case, lexicographic cliques are replaced by left-lexicographic and right-lexicographic cliques, where the colour of the edge is determined either by its left or right endpoint. In this paper, however, we will restrict our attention to Definition \ref{def-erado}, as this will be more natural in our study of ``off-diagonal'' Erd\H{o}s-Rado numbers and brings no real difference.

The upper bound on $\eradograph{t}$ implied by the original proof of Erd\H{o}s and Rado \cite{erdos1950combinatorial}, as well as by its simplification due to Rado \cite{rado1986note}, was of the shape $2^{2^{2^{O(t)}}}$. On the other hand, the first non-trivial lower bound was given by Galvin \cite[p.30]{erdHos1974probabilistic}, who showed that $\eradograph{t}\ge (t/\sqrt{2}+o(1))^t$. Both bounds were significantly improved in the paper of Lefmann and R\"odl \cite{lefmann1995erdHos} to 
\begin{align}\label{eq-lef-rodl-bounds}
    2^{\gamma_1t^2}\le \eradograph{t}\le t^{\gamma_2t^2},
\end{align}
for some absolute constants $\gamma_1,\gamma_2>0$. To establish the lower bound, observe that if the colouring uses at most $t-2$ colours, then neither rainbow nor lexicographic $K_t$ appear in it. Therefore, $\eradograph{t}\geq \operatorname{R}(t-2;t)\geq 2^{\Omega(t^2)}$, as mentioned above. For more details, see Proposition \ref{prop-offerado-general}.

\begin{figure}
    \centering
    \includegraphics[width=0.8\linewidth]{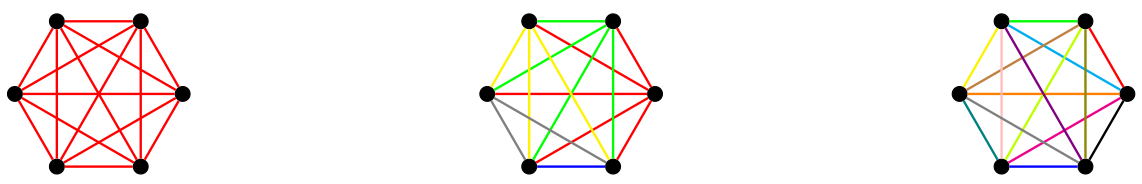}
    \caption{Illustrations of canonical colourings of $K_6$.}
\end{figure}

All the variations of the classical Ramsey theorem we have discussed so far equally make sense when extended to hypergraphs. We will refrain from giving any detailed presentation of these results, and will only mention that one of the main remaining open questions in this direction is to determine the correct tower-height of two-colour hypergraph Ramsey numbers. The current bounds are $t_{k-2}(\Omega_k(t^2))\le \operatorname{R}^{(k)}(t)\le t_{k-1}(O_k(t))$. Here $\operatorname{R}^{(k)}(t)$ denotes the minimum $n$ such that any colouring of edges of $\kkt{k}{n}$, the complete $k$-uniform hypergraph on $n$ vertices, in two colours contains a monochromatic copy of $\kkt{k}{t}$. Also, $t_r(x)$ is the $r$-th tower function defined by $t_0(x)=x$ and $t_{r+1}(x)=2^{t_r(x)}$. Here the upper bound is due to Erd\H{o}s and Rado \cite{erdos1952combinatorial} and the lower bound is due to Erd\H{o}s, Hajnal, and Rado \cite{erdHos1965partition}, using the famous \emph{stepping-up lemma}.

Quite interestingly, this gap has been closed in the case of hypergraph Erd\H{o}s-Rado numbers. However, at this point, it still may be unclear what the hypergraph canonical Ramsey theorem actually states. We will now carefully motivate its statement (for additional motivation, we refer the reader to the introduction of \cite{reiher2022quantitative}). In the case of ordered graphs, Erd\H{o}s-Rado canonical Ramsey theorem identifies four canonical colouring patterns which must appear in a sufficiently large clique (where we count left-lexicographic and right-lexicographic cliques separately). These were the patterns whose every subgraph had the same colouring pattern as the original graph, and the colour of the edge was uniquely determined by a certain subset of its vertices (e.g. the colour of edges in a left-lexicographic clique is determined by the left endpoint of the edge). 

Intuitively speaking, a canonical colouring of an ordered $k$-uniform clique $K_t^{(k)}$ on the vertex set $[t]$ is any colouring with the property that any two subsets $A, B\subseteq [t]$ with $|A|=|B|$ induce the same colouring, up to relabelling the colours. Such colourings can also be described more precisely and concretely: given a fixed subset $J\subseteq [k]$, the \textit{$J$-canonical colouring} is the colouring with the property that edges $\{x_1, \dots, x_k\}$ and $\{y_1, \dots, y_k\}$ have the same colour precisely when $x_j=y_j$ for all $j\in J$ (here, we assume that $x_1, \dots, x_k$ and $y_1, \dots, y_k$ are ordered in an increasing order). Then, the hypergraph canonical Ramsey theorem states that any $t\geq k\geq 2$ and any edge-colouring sufficiently large $k$-uniform clique $K_N^{(k)}$ contains a $J$-canonically coloured clique $K_t^{(k)}$, for some $J\subseteq [k]$. The smallest such $N$ is denoted by $\hyperado{k}{t}$.

In \cite{shelah1996finite}, Shelah proved that $\hyperado{k}{t}\le t_{k-1}(O_k(t^{8(2k-1)}))$, thus matching the tower-height of the corresponding lower bound $\hyperado{k}{t}\ge t_{k-1}(\Omega_k(t^2))$ of Lefmann and R\"odl \cite{lefmann1995erdHos}, whose original upper bound has tower height $k$. Shelah's bound was later reproved by Reiher, R\"odl, Sales, Sames, and Schacht \cite{reiher2022quantitative} in their work on a related canonisation problem.

In this paper, we will study a version of the hypergraph canonical Ramsey theorem for multipartite hypergraphs. The study of this problem for hypergraphs was initiated by Carvajal, Santos, and Schacht in \cite{carvajal2024canonical}. Before we go any further, let us formally define the problem. 

\begin{definition}\label{def-multierado}\sloppy
    For positive integers $k,t$, let $\cmplt{k}{t}$ be a complete $k$-uniform $k$-partite hypergraph with parts of size $t$, that is, a hypergraph $H$ with the vertex set $V=V_1\sqcup\dots\sqcup V_k$, where $|V_i|=t$ for all $i$, and the edge set $E=V_1\times\dots\times V_k$.

    We say that $\cmplt{k}{t}$ is \emph{canonically coloured} under a colouring $\chi:E(H)\to\N$ if there exists a subset $J\subseteq[k]$ such that for any pair of edges $\{x_1, \dots, x_k\},\{y_1, \dots, y_k\}\in E(\cmplt{k}{t})$, these edges receive the same color if and only if $x_i=y_i$ for all $i\in J$.
    
    We denote by $\erado{k}{t}$ the minimum integer $n$ such that any colouring of $E(\cmplt{k}{n})$ admits a canonically coloured copy of $\cmplt{k}{t}$.
\end{definition}

\begin{figure}[ht]
    \centering
    \begin{minipage}{0.3\textwidth}
        \centering
        \begin{tikzpicture}[scale=.85]
            \node[vertex] (x1) at (0, 1) {};
            \node[vertex] (x2) at (0, -1) {};

            \node[vertex] (y1) at (2, 1) {};
            \node[vertex] (y2) at (2, -1) {};

            \node[vertex] (z1) at (4, 1) {};
            \node[vertex] (z2) at (4, -1) {};

            \def\upp{0.65}
            \draw [thick,red,rounded corners] ($(x1)+(90:\upp)$) --  ($(y1)+(90:\upp)$) --  ($(z1)+(90:\upp)$) --  ($(z1)+(0:\upp)$) --  ($(z1)+(-90:\upp)$) --  ($(y1)+(-90:\upp)$) --  ($(x1)+(-90:\upp)$) --  ($(x1)+(-180:\upp)$) -- cycle;

            \def\upp{0.5}
            \draw [thick,red,rounded corners] ($(x1)+(90:\upp)$) --  ($(y1)+(90:\upp)$) --  ($(z2)+(90:\upp)$) --  ($(z2)+(0:\upp)$) --  ($(z2)+(-90:\upp)$) --  ($(y1)+(-90:\upp)$) --  ($(x1)+(-90:\upp)$) --  ($(x1)+(-180:\upp)$) -- cycle;

            \def\upp{0.35}
            \draw [thick,red,rounded corners] ($(x1)+(90:\upp)$) --  ($(y2)+(90:\upp)$) --  ($(z1)+(90:\upp)$) --  ($(z1)+(0:\upp)$) --  ($(z1)+(-90:\upp)$) --  ($(y2)+(-90:\upp)$) --  ($(x1)+(-90:\upp)$) --  ($(x1)+(-180:\upp)$) -- cycle;

            \def\upp{0.2}
            \draw [thick,red,rounded corners] ($(x1)+(90:\upp)$) --  ($(y2)+(90:\upp)$) --  ($(z2)+(90:\upp)$) --  ($(z2)+(0:\upp)$) --  ($(z2)+(-90:\upp)$) --  ($(y2)+(-90:\upp)$) --  ($(x1)+(-90:\upp)$) --  ($(x1)+(-180:\upp)$) -- cycle;

            \def\upp{0.2}
            \draw [thick,blue,rounded corners] ($(x2)+(90:\upp)$) --  ($(y1)+(90:\upp)$) --  ($(z1)+(90:\upp)$) --  ($(z1)+(0:\upp)$) --  ($(z1)+(-90:\upp)$) --  ($(y1)+(-90:\upp)$) --  ($(x2)+(-90:\upp)$) --  ($(x2)+(-180:\upp)$) -- cycle;

            \def\upp{0.35}
            \draw [thick,blue,rounded corners] ($(x2)+(90:\upp)$) --  ($(y1)+(90:\upp)$) --  ($(z2)+(90:\upp)$) --  ($(z2)+(0:\upp)$) --  ($(z2)+(-90:\upp)$) --  ($(y1)+(-90:\upp)$) --  ($(x2)+(-90:\upp)$) --  ($(x2)+(-180:\upp)$) -- cycle;

            \def\upp{0.5}
            \draw [thick,blue,rounded corners] ($(x2)+(90:\upp)$) --  ($(y2)+(90:\upp)$) --  ($(z1)+(90:\upp)$) --  ($(z1)+(0:\upp)$) --  ($(z1)+(-90:\upp)$) --  ($(y2)+(-90:\upp)$) --  ($(x2)+(-90:\upp)$) --  ($(x2)+(-180:\upp)$) -- cycle;

            \def\upp{0.65}
            \draw [thick,blue,rounded corners] ($(x2)+(90:\upp)$) --  ($(y2)+(90:\upp)$) --  ($(z2)+(90:\upp)$) --  ($(z2)+(0:\upp)$) --  ($(z2)+(-90:\upp)$) --  ($(y2)+(-90:\upp)$) --  ($(x2)+(-90:\upp)$) --  ($(x2)+(-180:\upp)$) -- cycle;
            
            \draw [thick,black] (x1) -- (x2) node[midway, left] {$1$};
            \draw [thick,black] (y1) -- (y2) node[midway, left] {$2$};
            \draw [thick,black] (z1) -- (z2) node[midway, right] {$3$};
        \end{tikzpicture}
    \end{minipage}
    \hspace{0.1\textwidth}
    \begin{minipage}{0.3\textwidth}
        \centering
        \begin{tikzpicture}[scale=.85]
            \node[vertex] (x1) at (0, 1) {};
            \node[vertex] (x2) at (0, -1) {};

            \node[vertex] (y1) at (2, 1) {};
            \node[vertex] (y2) at (2, -1) {};

            \node[vertex] (z1) at (4, 1) {};
            \node[vertex] (z2) at (4, -1) {};

            \def\upp{0.65}
            \draw [thick,red,rounded corners] ($(x1)+(90:\upp)$) --  ($(y1)+(90:\upp)$) --  ($(z1)+(90:\upp)$) --  ($(z1)+(0:\upp)$) --  ($(z1)+(-90:\upp)$) --  ($(y1)+(-90:\upp)$) --  ($(x1)+(-90:\upp)$) --  ($(x1)+(-180:\upp)$) -- cycle;

            \def\upp{0.5}
            \draw [thick,green,rounded corners] ($(x1)+(90:\upp)$) --  ($(y1)+(90:\upp)$) --  ($(z2)+(90:\upp)$) --  ($(z2)+(0:\upp)$) --  ($(z2)+(-90:\upp)$) --  ($(y1)+(-90:\upp)$) --  ($(x1)+(-90:\upp)$) --  ($(x1)+(-180:\upp)$) -- cycle;

            \def\upp{0.35}
            \draw [thick,Goldenrod,rounded corners] ($(x1)+(90:\upp)$) --  ($(y2)+(90:\upp)$) --  ($(z1)+(90:\upp)$) --  ($(z1)+(0:\upp)$) --  ($(z1)+(-90:\upp)$) --  ($(y2)+(-90:\upp)$) --  ($(x1)+(-90:\upp)$) --  ($(x1)+(-180:\upp)$) -- cycle;

            \def\upp{0.2}
            \draw [thick,blue,rounded corners] ($(x1)+(90:\upp)$) --  ($(y2)+(90:\upp)$) --  ($(z2)+(90:\upp)$) --  ($(z2)+(0:\upp)$) --  ($(z2)+(-90:\upp)$) --  ($(y2)+(-90:\upp)$) --  ($(x1)+(-90:\upp)$) --  ($(x1)+(-180:\upp)$) -- cycle;

            \def\upp{0.2}
            \draw [thick,red,rounded corners] ($(x2)+(90:\upp)$) --  ($(y1)+(90:\upp)$) --  ($(z1)+(90:\upp)$) --  ($(z1)+(0:\upp)$) --  ($(z1)+(-90:\upp)$) --  ($(y1)+(-90:\upp)$) --  ($(x2)+(-90:\upp)$) --  ($(x2)+(-180:\upp)$) -- cycle;

            \def\upp{0.35}
            \draw [thick,green,rounded corners] ($(x2)+(90:\upp)$) --  ($(y1)+(90:\upp)$) --  ($(z2)+(90:\upp)$) --  ($(z2)+(0:\upp)$) --  ($(z2)+(-90:\upp)$) --  ($(y1)+(-90:\upp)$) --  ($(x2)+(-90:\upp)$) --  ($(x2)+(-180:\upp)$) -- cycle;

            \def\upp{0.5}
            \draw [thick,Goldenrod,rounded corners] ($(x2)+(90:\upp)$) --  ($(y2)+(90:\upp)$) --  ($(z1)+(90:\upp)$) --  ($(z1)+(0:\upp)$) --  ($(z1)+(-90:\upp)$) --  ($(y2)+(-90:\upp)$) --  ($(x2)+(-90:\upp)$) --  ($(x2)+(-180:\upp)$) -- cycle;

            \def\upp{0.65}
            \draw [thick,blue,rounded corners] ($(x2)+(90:\upp)$) --  ($(y2)+(90:\upp)$) --  ($(z2)+(90:\upp)$) --  ($(z2)+(0:\upp)$) --  ($(z2)+(-90:\upp)$) --  ($(y2)+(-90:\upp)$) --  ($(x2)+(-90:\upp)$) --  ($(x2)+(-180:\upp)$) -- cycle;
            
            \draw [thick,black] (x1) -- (x2) node[midway, left] {$1$};
            \draw [thick,black] (y1) -- (y2) node[midway, left] {$2$};
            \draw [thick,black] (z1) -- (z2) node[midway, right] {$3$};
        \end{tikzpicture}
    \end{minipage}
    \caption{Illustrations of the $\{1\}$-canonical and $\{2,3\}$-canonical colourings of $\tricmplt{2}$, respectively.}
    \label{fig-k222}
\end{figure}

The existence of multipartite Erd\H{o}s-Rado numbers follows directly from the usual canonical Ramsey theorem, but this approach only gives a tower-type upper bound. Recently, however, this has been significantly improved to just single exponential bounds by Carvajal, Santos, and Schacht \cite{carvajal2024canonical}. They showed that for sufficiently large $t$, one has $\erado{k}{t}\le t^{t^{\binom{k}{2}}}$ for $k\ge 4$ and $\erado{3}{t}\le t^{30t^3}$\footnote{Note that the authors only claim the bound of the form $t^{t^{k^2}}$. Nevertheless, with more careful analysis their methods can give $t^{t^{\binom{k}{2}}}$ instead, as they remarked in the introduction to their paper.}. This line of work parallels the study of canonical Ramsey numbers of bipartite graphs (corresponding to the case $k=2$ above), which was considered by Gishboliner, Milojevi\'c, Sudakov, and Wigderson \cite{gishboliner2025canonical} and, independently, by Dob\'ak and Mulrenin \cite{dobak2026sharp}, where it was shown that $\erado{2}{t}=2^{\Theta(t\ln t)}$.

The best lower bound on $\erado{k}{t}$ comes from a random colouring in $t^k-1$ colours (so that no rainbow $\cmplt{k}{t}$ can exist), and is of the form $\erado{k}{t}\ge t^{(1-o(1))t^{k-1}}$ (where $o(1)\to 0$ as $t\to \infty$). Hence, for all $k\geq 3$, there was a gap between this lower bound and the upper bounds provided by Carvajal, Santos and Schacht. The main contribution of this paper is that we close this gap for $k=3$, in favour of the lower bound.

\begin{theorem}\label{thm-tri-erado}
    For all sufficiently large positive integers $t$, it holds that $\erado{3}{t}\le t^{151t^2}$.
\end{theorem}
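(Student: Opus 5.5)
We take $n=t^{151t^2}$ and an arbitrary colouring $\chi$ of $V_1\times V_2\times V_3$ with $|V_i|=n$. The argument reduces the tripartite problem to a family of bipartite problems indexed by vertices of one part, and then uses the bipartite canonical theory ($\erado{2}{t}=2^{\Theta(t\ln t)}$) together with a pigeonhole over ``types''. The budget $t^{O(t^2)}$ is exactly what one can afford when pigeonholing over $O(t^2)$ choices of size $\mathrm{poly}(t)$, or over $t$ choices each costing $t^{O(t)}$.

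The first step is to pass to a subhypergraph that is \emph{pairwise regular}. For each pair of parts, say $(V_1,V_2)$, and each fixed $z\in V_3$, the colouring $\chi(\cdot,\cdot,z)$ is a bipartite colouring. We want subsets $X\subseteq V_1$, $Y\subseteq V_2$, $Z\subseteq V_3$ with $|X|,|Y|$ of size roughly $t^{Ct}$ and $|Z|$ huge, such that for every $z\in Z$ the bipartite colouring on $X\times Y$ has the same ``canonical type''. Here the type records which of the $J\subseteq\{1,2\}$ patterns it follows, and also which coordinate pairs repeat colours across different $z$. This will be done by a dependent-random-choice / iterative-halving scheme: sample $X,Y$ and keep only the $z$ for which the link has a given pattern, losing a factor $t^{O(t^2)}$ since the number of possible patterns on $X\times Y$ restricted to $t\times t$ canonical structures is $t^{O(t^2)}$.

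Next, we classify the behaviour in the third coordinate. For a fixed canonical $t\times t$ block $A\times B\subseteq X\times Y$ and varying $z\in Z$, we record whether colours across different $z$ coincide, and with which $(a,b,z')$ a colour $\chi(a,b,z)$ may collide. Each $z$ gives a ``colour-collision profile'' relative to a fixed reference set of at most $t^2$ colours. Pigeonholing over profiles among $t^{O(t^2)}$ options yields $t$ vertices $z$ with identical profile. A short case analysis over the $8$ subsets $J\subseteq[3]$ then shows that the profile forces a $J$-canonical colouring, or else that a forbidden collision pattern recurs. In the latter case a fresh pigeonhole can be applied, iterating at most a constant number of times.

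The main obstacle is the first, symmetrisation step. It must guarantee that the \emph{same} sets $A,B$ are canonical in the link of many $z$ simultaneously, with cost only $t^{O(t^2)}$ rather than the $t^{t^3}$ of Carvajal--Santos--Schacht. A naive bipartite canonical search per $z$ produces different $A,B$ for different $z$, and unioning over choices of $A,B$ costs $\binom{n}{t}^2$, which is fine, but then one needs many $z$ sharing them. That is a supersaturation statement: a bipartite colouring of $X\times Y$ with $|X|=|Y|=m$ contains at least $m^{2t}/t^{O(t^2)}$ canonical $t\times t$ blocks. The plan for this is to first prove such a counting version of the bipartite canonical theorem by averaging over random $s\times s$ subgrids with $s=2^{O(t\ln t)}$. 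Averaging over $z$ and pigeonholing then gives a block canonical for $n\,t^{-O(t^2)}$ of the $z$'s, which is enough for the final step provided constants are tuned (giving the exponent $151$).
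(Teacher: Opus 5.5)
\paragraph{Verdict.} There is a genuine gap: your third-coordinate step cannot be completed from a block chosen only for link-canonicity, and the key density lemma is missing.

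\paragraph{Where the argument fails.} Your supersaturation lemma itself is sound. Averaging over $s\times s$ subgrids with $s=\erado{2}{t}=t^{O(t)}$ shows that every link $\chi(\cdot,\cdot,z)$ contains a $t^{-O(t^2)}$-fraction of canonical $t\times t$ blocks. Hence some block $A\times B$ is $J_0$-canonical in the links of $n\,t^{-O(t^2)}$ vertices $z$, for a fixed $J_0\subseteq\{1,2\}$. The gap is the third coordinate, and it is not bookkeeping: a block chosen only for canonicity of links may have no canonical extension at all. Let $E_0\subseteq V_1\times V_2$ be a random bipartite graph of density $1/2$. Colour $(a,b,z)$ by $(a,b)$ if $ab\in E_0$, and by $(a,b,z)$ otherwise. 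Every block is rainbow in every link, so your criterion does not distinguish blocks. But if $A\times B$ meets both $E_0$ and its complement, as a typical block does, then for any $z\neq z'$ the colouring on $A\times B\times\{z,z'\}$ is neither $\{1,2\}$- nor $[3]$-canonical. Passing to a larger $s\times s$ block does not help. Supersaturation for $s\times s$ blocks costs $s^{O(s^2)}$, so only $s=O(t)$ is affordable. A typical block of that size contains no $t\times t$ rectangle inside $E_0$ or inside its complement.

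Independently, the profile pigeonhole does not do what you claim. Equal collision profiles relative to one reference colour set say nothing about collisions among the selected $z$'s themselves. Re-pigeonholing relative to each newly selected $z$ takes $t$ rounds of cost $t^{O(t^2)}$, i.e.\ $t^{O(t^3)}$. That is the Carvajal--Santos--Schacht bound, not a constant number of iterations.

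\paragraph{What is missing.} You need to classify pairs by their behaviour in the third coordinate globally, before any block is chosen, and then find a canonical block inside a possibly sparse set of pairs. The paper does this with $\delta=t^{-24}$.
\begin{itemize}
\item \emph{Few popular pairs.} Suppose that in each of $V_1\times V_2$, $V_1\times V_3$, $V_2\times V_3$ at most $\delta n^2$ pairs have a $\delta$-popular colour. This boundedness propagates to single vertices and the empty tuple. Sampling $2t$ vertices per part then gives a rainbow $K^{(3)}_{t,t,t}$ (Proposition~\ref{prop-bounded}).
\item \emph{Many popular pairs.} Otherwise, say a set $E'$ of $\delta n^2$ pairs in $V_1\times V_2$ have popular colours. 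Build the auxiliary graph between $V_3$ and $E'$, joining $z$ to $ab$ when $\chi(a,b,z)$ is the popular colour of $ab$. K\H{o}v\'ari--S\'os--Tur\'an in this graph gives $t$ vertices $z$ and a set $E''$ of $t^{-25t}n^2$ pairs whose colour is the same for all these $z$.
\end{itemize}
What remains is a canonical $K_{t,t}$ in the bipartite graph $E''$ of density $d=t^{-25t}$. This needs a Tur\'an-type canonical theorem with $n\ge(t/d)^{O(t)}$ (Theorem~\ref{thm-bip-turan-erado}). Your supersaturation statement is about colourings of the complete bipartite graph and cannot replace it. The canonical blocks it counts may all avoid $E''$. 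Finding $K_{s,s}\subseteq E''$ with $s=\erado{2}{t}$ via K\H{o}v\'ari--S\'os--Tur\'an would require $n\ge d^{-t^{O(t)}}$. Proving that density version is the real content of the argument: popular vertices versus a random sample of the other side, then K\H{o}v\'ari--S\'os--Tur\'an and a greedy-or-random finish. Your plan has no substitute for it.
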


\bigskip

Going back to standard Ramsey numbers, note that so far we have only discussed the diagonal case, where all desired copies were of the same form. Nonetheless, the so-called off-diagonal Ramsey numbers $\operatorname{R}(s,t)$ have been of particular interest in literature, especially when $s$ is a fixed constant and $t$ grows. See the work of Brada\v{c} for a recent remarkable breakthrough on this question \cite{bradac2026off}. Here, we study the off-diagonal regime for canonical Ramsey numbers, that is, we ask for monochromatic, lexicographic and rainbow cliques of different sizes.

\begin{definition}\label{def-offerado}
    Given positive integers $a,b,c$, we denote by $\offerado{a}{b}{c}$ the smallest integer $n$ such that any colouring of edges of $K_n$ admits a monochromatic copy of $K_a$, a lexicographic copy of $K_b$, or a rainbow copy of $K_c$.
\end{definition}

The off-diagonal canonical Ramsey numbers were studied by Araujo and Peng \cite{araujo2024off} and Axenovich and Jamison \cite{axenovich2005canonical}, and their results only deal with the case when one of the parameters $a,b,c$ is a small fixed constant. In particular, in \cite{araujo2024off} it is shown that $\offerado{3}{b}{c}=\Theta(c^3/\ln c)^{b-2}$ (here the lower bound follows from the results of Jiang \cite{jiang2009canonical}), $\offerado{a}{3}{c}= O(ac^3/\ln c)$, and $\offerado{4}{4}{c}= O(c^7/\ln^2 c)$, while in \cite{axenovich2005canonical} the authors show that $2^{a(b-2)/4}\le \offerado{a}{b}{3}\le 3^{ab}$ and $5^{\lfloor b/2\rfloor-1}+1\le \offerado{3}{b}{3}\le 5^{b/2}$.

Seemingly no general bounds for the off-diagonal canonical Ramsey numbers have been discussed in literature. Nevertheless, these can be easily deduced from the respective bounds in the diagonal case.

\begin{proposition}\label{prop-offerado-general}
There are absolute constants $\gamma_1,\gamma_2>0$ such that for all integers $a,c\ge 3, b\ge 4$, it holds that $2^{\gamma_1a\min\{b,c^2\}}\le \offerado{a}{b}{c}\le c^{\gamma_2 ab}$.
\end{proposition}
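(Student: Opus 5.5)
The plan is to prove the two inequalities separately. For the lower bound I will use colourings with few colours, and for the upper bound I will adapt the Lefmann--R\"odl argument behind \eqref{eq-lef-rodl-bounds}.

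\emph{Lower bound.} Set $k=\min\{b-2,\lfloor (c-1)/2\rfloor\}$. Any colouring of $K_n$ with at most $k$ colours has no lexicographic $K_b$, because a lexicographic $K_b$ uses $b-1>k$ colours. It also has no rainbow $K_c$, because a rainbow $K_c$ uses $\binom c2>k$ colours. Hence $\offerado abc>\operatorname{R}(k;a)$. By Lefmann's bound, $\operatorname{R}(k;a)\ge 2^{\Omega(ka)}$, which gives $2^{\Omega(a\min\{b,c\})}$. To reach $\min\{b,c^2\}$, I will instead allow up to $\binom c2-1$ colours but rule out rainbow $K_c$ directly. Partition the colour set into roughly $c/2$ \emph{groups}, each group containing about $c$ distinct colours. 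Take a product colouring: first colour with a $k'$-colour Ramsey colouring, with $k'\approx\min\{b-2,\varepsilon c^2\}$ colours and no monochromatic $K_a$, and then refine. It is simpler to take the product of two colourings. Let $\chi_1$ use $\min\{b-2,\varepsilon c^2\}$ colours and be a random colouring on $2^{\Omega(a k')}$ vertices with no monochromatic $K_a$. A rainbow $K_c$ needs $\binom c2$ distinct colours, so we only need $k'<\binom c2$. The remaining problem is lexicographic $K_b$, which needs $b-1$ colours, so $k'\le b-2$. Thus $k'=\min\{b-2,\binom c2-1\}=\Omega(\min\{b,c^2\})$ works directly, with no refinement. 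Then $\operatorname{R}(k';a)\ge 2^{\gamma k' a}$ by Lefmann's lower bound (valid since $k'\ge 2$ when $b\ge4$, $c\ge3$), and the lower bound follows.

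\emph{Upper bound.} I will follow Lefmann--R\"odl and build a sequence greedily. Start with $V_0=V(K_n)$ and repeat the following step. Pick a vertex $v_i\in V_{i-1}$ and look at the colours of the edges from $v_i$ to $V_{i-1}$. If some colour class is large, say of size at least $|V_{i-1}|/(c^{2}\cdot\text{something})$, restrict $V_i$ to that class and record the colour as $v_i$'s ``label''. Otherwise the colours at $v_i$ are spread out. In that case, at the cost of a factor of about $c^2$ (or $ac$), pass to a subset $V_i$ in which the colours from $v_i$ are distinct and avoid the colours already used among $v_1,\dots,v_{i-1}$; this is possible since few colours are ``forbidden'', namely $O(i^2)$ of them, and each has small class. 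This produces a sequence in which every vertex is either ``monochromatic-type'' with a label colour or ``rainbow-type''.

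If $ab$ steps are performed, then one of the following three cases occurs. In the first case, $c$ vertices are rainbow-type, and these give a rainbow $K_c$; this requires care that the edges between rainbow-type vertices use fresh colours, which is handled by the forbidden-colour step. In the second case, $a$ labels are equal, and these give a monochromatic $K_a$. In the third case, there are $b$ distinct labels; these give a lexicographic $K_b$ provided the labels are also distinct from each other, and in general the label multiset gives either $a$ repeats or $b$ distinct values within $ab$ steps. Each step shrinks the set by a factor of at most $\mathrm{poly}(c)\cdot$(something bounded by the forbidden colours), so $n=c^{O(ab)}$ suffices.

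I expect the main obstacle to be the rainbow-type step. In the spread case, we need to guarantee that the colours from $v_i$ do not coincide with the colours of earlier rainbow-type edges, while losing only a $\mathrm{poly}(c)$ factor rather than a factor in $a$ or $b$. To handle this, I will only forbid colours among edges between at most $c$ rainbow-type vertices, which is $O(c^2)$ colours. Since the step is in the spread case, each such colour has a class of size at most $|V_{i-1}|/(Cc^2)$, so removing these classes costs only half of the set. The threshold $C c^2$ then yields a per-step loss of $c^{O(1)}$, giving the bound $c^{\gamma_2 ab}$.
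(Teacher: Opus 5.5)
Your lower bound, once the abandoned detour about colour groups and product colourings is removed, is exactly the paper's argument. A colouring with $k'=\min\{b-2,\binom c2-1\}\ge 2$ colours and no monochromatic $K_a$ has too few colours for a lexicographic $K_b$ or a rainbow $K_c$, and $\operatorname{R}(k';a)\ge 2^{\Omega(k'a)}$.

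The upper bound has a genuine gap: the rainbow-type step cannot be implemented as described.
\begin{itemize}
\item You cannot pass, at a $\mathrm{poly}(c)$ cost, to a set $V_i$ on which all edges from $v_i$ have distinct colours. Such a $V_i$ has size at most the number of colours at $v_i$, and in the spread case this can be as small as about $Cc^2$.
\item What you really need is that the \emph{later chosen} rainbow-type vertices see pairwise distinct, fresh colours from $v_i$. This can only be enforced later, by deleting colour classes of $v_i$ inside the current set $V_{j-1}$. But the spread property of $v_i$ was only with respect to $V_{i-1}$. An intervening label step keeps only a set of size comparable to one colour class of $v_i$, and nothing prevents the $p_{i+1}$-coloured neighbourhood of $v_{i+1}$ from lying entirely inside a single colour class of $v_i$. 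Then $v_i$ sees one colour to every later vertex, and deleting ``forbidden'' classes can wipe out the whole set.
\item The obstruction $\chi(v_{i'}u)=\chi(v_{i''}u)$, for two earlier rainbow-type vertices, is not a union of colour classes at any single vertex. So no per-vertex spread condition controls it at all.
\item The counting also fails. With $ab$ steps and fewer than $c$ rainbow-type vertices, you get only $ab-c+1$ labelled vertices. That suffices for the pigeonhole step only when $c\le a+b-1$. Running about $ab+c$ steps with a $\mathrm{poly}(c)$ loss each gives $c^{O(ab+c)}$, which is not $c^{O(ab)}$ when $c\gg ab$.
\end{itemize}

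The missing idea is not to interleave the two kinds of steps. As in the paper, take $\delta=c^{-4}$ and $n=\delta^{-ab}$. Repeatedly pass to the $p$-coloured neighbourhood of a vertex that has a $\delta$-popular colour $p$ in the current set.
\begin{itemize}
\item If this runs for $ab$ steps, the labels alone give $a$ equal colours, hence a monochromatic $K_a$, or $b$ distinct ones, hence a lexicographic $K_b$.
\item Otherwise the process stops at a set of size $m\ge 1/\delta$ in which \emph{no} vertex has a $\delta$-popular colour. There are at most $\frac{\delta}{2}m^3$ monochromatic pairs of adjacent edges and at most $\frac{\delta}{8}m^4$ monochromatic pairs of disjoint edges. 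So a uniformly random $c$-subset contains in expectation at most $\frac{\delta}{2}c^3+\frac{\delta}{8}c^4<1$ obstructions, and is rainbow with positive probability.
\end{itemize}
The random choice handles all obstructions at once by averaging, including the two-vertex one above. It costs only the single requirement $m\ge c^4$, rather than a factor per rainbow vertex.
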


The proof of this proposition relies entirely on the proof of (\ref{eq-lef-rodl-bounds}) due to Lefmann and R\"odl \cite{lefmann1995erdHos}, however, we will include it in Section \ref{sec-offerado} for completeness.

In this paper, we study off-diagonal canonical Ramsey numbers in different regimes in which some of the parameters $a,b,c$ are fixed or slowly growing. First, when $c$ is large in terms of $a$, we show the following.

\begin{theorem}\label{thm-offerado-large-c}
    Given arbitrary $\eta>0$, if $a,b,c$ are integers such that $a,c\ge 3$, $b\ge 4$, and $c\ge a^{1+\eta}$, then $\offerado{a}{b}{c}\ge c^{\Omega_{\eta}(ab)}$.
\end{theorem}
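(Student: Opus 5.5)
The plan is to build the colouring as a lexicographic product of $k\approx b/2$ copies of a \emph{base colouring}. If a base colouring of $K_m$ with $m=c^{\Omega_\eta(a)}$ vertices has the right properties, the product has $m^{k}=c^{\Omega_\eta(ab)}$ vertices. The base colouring $\chi_0$ of $K_m$ should have no monochromatic $K_a$, no lexicographic $K_3$, and no rainbow $K_{c'}$, where $c'$ is somewhat smaller than $c$ (a constant power of $c$, to be fixed later).

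\textbf{The product.} Take vertex set $[m]^k$. For $x\ne y$, let $i$ be the first coordinate where they differ, and colour $xy$ by the pair $(i,\chi_0(x_i,y_i))$, so the palettes of different levels are disjoint. Fix a clique $Q$ in the product and split it by its first coordinate into groups. Every edge between two groups gets a level-$1$ colour determined only by the pair of groups. Hence:
\begin{itemize}
\item \emph{Monochromatic.} A monochromatic clique lies entirely on a single level and projects to a monochromatic clique of $\chi_0$. So there is no monochromatic $K_a$.
\item \emph{Lexicographic.} The group structure of $Q$ forces its projection onto the group representatives to be lexicographic at level $1$, which has size at most $2$ because $\chi_0$ has no lexicographic triangle. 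Moreover, at most one group can contain more than one vertex of $Q$, and the rest of $Q$ recurses into that group. So a lexicographic clique has at most about $2$ vertices per level, i.e.\ size at most $2k+O(1)<b$ for $k=\lfloor (b-2)/2\rfloor$.
\item \emph{Rainbow.} In the same way, a rainbow clique has at most one group of size $\ge 2$. If two groups had size $\ge2$, the edges between them would repeat a colour. So its size is at most about the sum over levels of the rainbow numbers of $\chi_0$.
\end{itemize}

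\textbf{The rainbow loss across levels.} Bounding the rainbow size by $k(c'-1)$ would force $c'\approx c/b$, which is useless when $b\gg c$. I would fix this by sharing palettes between levels instead of keeping them disjoint, e.g.\ reusing a palette cyclically. Then a rainbow clique cannot collect fresh colours on each level, while monochromatic cliques still cannot cross levels. That second point needs checking: a clique spanning two levels contains edges whose colours are forced to differ by the group structure. If sharing palettes turns out to be problematic, the fallback is to prove the theorem in the regime $b\le c^{\delta}$, where $c'=c/k\ge c^{1-\delta}$ is harmless. For larger $b$, I would reduce to that regime by monotonicity in $b$, giving up only a constant factor in the exponent where possible.

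\textbf{The base colouring (main obstacle).} A uniformly random $r$-colouring with $r\approx c'^2/4$ colours has no rainbow $K_{c'}$, since $\binom{c'}{2}>r$. By the first moment method it has no monochromatic $K_a$ once $m\le r^{(a-1)/2}$. However, it is full of lexicographic triangles, and excluding lexicographic triangles (two equal edges at a vertex with a differently coloured third edge) is the crux. A colouring with no lexicographic triangle is a Gallai-type structure: every triangle is monochromatic or rainbow. I would therefore build $\chi_0$ from an object in which every triangle is monochromatic or rainbow, for instance the partition of $E(K_m)$ into the lines of a Steiner-like linear space or a finite affine/projective geometry over $\mathbb F_q$. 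In such a geometry, each colour class should be a disjoint union of cliques (so equal colours at a vertex close up into monochromatic triangles). The number of colours must stay below $\binom{c'}{2}$, and cliques inside a class must have size $<a$. The hypothesis $c\ge a^{1+\eta}$ should be exactly what permits choosing $q\approx a$ and dimension $\approx \eta^{-1}$-dependent, so that $m=q^{\Omega_\eta(a)}$-type counts equal $c^{\Omega_\eta(a)}$. Verifying that such a geometric colouring simultaneously has bounded rainbow cliques and $m=c^{\Omega_\eta(a)}$ is the step I expect to be hardest, and I would try it first.
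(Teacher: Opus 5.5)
Your construction fails at the base, because the base colouring cannot exist. A colouring $\chi_0$ with no lexicographic triangle has transitive colour classes, so each class is a disjoint union of cliques. With no monochromatic $K_a$, every vertex then sees each colour at most $a-2$ times. Let $S$ be a maximum rainbow clique, $|S|=k<c'$. Every $v\notin S$ is joined to some $u\in S$ by an edge whose colour already occurs in $S$: a repetition $\chi_0(uv)=\chi_0(wv)$ with $u,w\in S$ forces $\chi_0(uw)=\chi_0(uv)$. Hence $m\le k+k\binom{k}{2}(a-2)<\frac12 ac'^3$. This is Proposition~\ref{prop-upper-b3-greedy}: $\offerado{a}{3}{c'}$ is polynomial in $a$ and $c'$. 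So no choice of field or dimension in your geometric candidate (which is the paper's Proposition~\ref{prop-b3-small-a-finspace}, giving only $O(ac^2)$ points) can reach $m=c^{\Omega_\eta(a)}$. Your product therefore has at most $(ac^3)^{O(b)}=c^{O(b)}$ vertices, losing the factor $a$ in the exponent.

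The missing idea is to let the base contain two-coloured triangles and forbid only lexicographic $K_4$'s. Take a red/blue colouring of $K_m$, $m=\operatorname{R}(a,c-2)-1$, with no red $K_a$ and no blue $K_{c-2}$, and give each blue edge its own new colour (Proposition~\ref{prop-lower-offerado-multiram} with $b=4$). Only red repeats, which gives three properties:
\begin{itemize}
\item there is no monochromatic $K_a$;
\item there is no lexicographic $K_4$, since that needs two distinct colours each used at least twice;
\item there is no rainbow $K_c$, since a rainbow clique has at most one red edge, and deleting its endpoints leaves an originally blue clique of size at most $c-3$.
\end{itemize}
A random graph with $p=\Theta(a\log c/c)$ gives $\operatorname{R}(a,c-2)\ge c^{\Omega_\eta(a)}$. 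This is exactly where $c\ge a^{1+\eta}$ is used, since it makes $\log(1/p)=\Omega_\eta(\log c)$.

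Your rainbow analysis of the product is also too pessimistic, and the patches built on it are unnecessary. The monotonicity fallback would in any case only transfer a bound of order $c^{\Omega(ac^{\delta})}$, far below $c^{\Omega(ab)}$ when $b\gg c^{\delta}$. If a clique has two vertices $u,v$ in one group and a vertex $w$ in another, then $uw$ and $vw$ get the same level-$1$ colour. So a rainbow clique lies inside a single group or meets every group at most once. By induction over levels, the rainbow number of the product is the maximum, not the sum, of those of the factors, and disjoint palettes with $c'=c$ work.

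For lexicographic cliques, the vertices outside the multi-vertex group must precede it in the order. Together with one representative of that group, they form a lexicographic clique of the base, so there are at most $2$ of them per level when the base has no lexicographic $K_4$. Hence $k=\lfloor (b-2)/2\rfloor$ levels on $[m]^k$ give lexicographic cliques of size at most $2k+1<b$, and $m^k=c^{\Omega_\eta(ab)}$ vertices. This repaired argument is the paper's: Proposition~\ref{prop-lower-offerado-blowup} iterated with $\ell=3$, on top of $\offerado{a}{4}{c}\ge\operatorname{R}(a,c-2)$.
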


This follows for small $b$ from bounds on multicolour Ramsey numbers and from a blow-up construction for general $b$. Essentially, it shows that in this regime the upper bound in Proposition \ref{prop-offerado-general} is sharp up to a constant in the exponent. Note that our result disproves \cite[Conjecture 19]{araujo2024off}, which conjectured that $\offerado{a}{b}{c}\le \gamma(c^3/\ln c)^{b-2}$ for some constant $\gamma=\gamma(a,b)>0$ and all $c\ge 4$.

On the contrary, our next theorem shows that for fixed $b\geq 4$, it is the general lower bound which is sharp (again up to a constant in the exponent).

\begin{theorem}\label{thm-offerado-fixed-b}
    For a fixed integer $b\geq 4$ and arbitrary integers $a\ge c\ge 3$, it holds that $\offerado{a}{b}{c}\le 2^{O_b(a)}$.
\end{theorem}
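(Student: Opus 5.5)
The plan is an Erd\H{o}s--Rado style greedy ``building sequence'' argument with a careful accounting of how much each step costs. Throughout we maintain a sequence of vertices $v_1,\dots,v_m$ with assigned colours $\chi_1,\dots,\chi_m$, together with a current set $W$ such that $v_i$ is joined to every later $v_j$ and to every vertex of $W$ in colour $\chi_i$. If $a-1$ of the indices share a colour, the corresponding vertices together with any vertex of $W$ form a monochromatic $K_a$. If $b-1$ indices have pairwise distinct colours, these vertices together with a vertex of $W$ form a lexicographic $K_b$. Hence we may assume the palette $S=\{\chi_1,\dots,\chi_m\}$ has size at most $b-2$ and each colour is used at most $a-2$ times. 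So the process runs for at most $(a-2)(b-2)$ steps, and exactly $|S|\le b-2$ of them introduce a new colour.

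\textbf{Two kinds of steps.} The goal is to make \emph{repeat} steps (those using a colour already in $S$) cost only a factor $O_b(1)$ in $|W|$, and to allow \emph{new-colour} steps to cost a factor $\mathrm{poly}(a)$. At each stage we proceed in the following order.
\begin{enumerate}
\item If some $v\in W$ has at least $|W|/(4b)$ neighbours in $W$ in a colour $\chi\in S$, we append $v$ with colour $\chi$ and restrict $W$ to that neighbourhood.
\item Otherwise, if some $v$ has a colour class of size at least $|W|/(Cc^3)$ in a colour outside $S$, we append $v$ with that new colour.
\item Otherwise we look for a rainbow $K_c$ inside $W$.
\end{enumerate}
If the process reaches the end, the total shrinkage is $(4b)^{(a-2)(b-2)}\cdot (Cc^3)^{b-2}$. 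Since $c\le a$, this is $2^{O_b(a)}\cdot a^{O(b)}=2^{O_b(a)}$. Starting from $n=2^{K_b a}$, the set $W$ stays nonempty throughout, which gives the bound.

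\textbf{The rainbow step.} In step 3, every colour outside $S$ is locally $|W|/(Cc^3)$-bounded. For such colours I would invoke the Alon--Jiang--Miller--Pritikin theorem: a locally $k$-bounded colouring of $K_N$ with $N\ge Ck c^3/\ln c$ contains a rainbow $K_c$. Alternatively I would reprove it directly by sampling a random $c$-set and deleting vertices on conflicting pairs. Since each vertex meets at most $|W|/(Cc^3)$ edges of each such colour, the expected number of monochromatic ``cherries'' and disjoint same-coloured pairs in a random $2c$-set is less than $c/2$, so a rainbow $K_c$ survives.

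\textbf{Main obstacle.} The difficulty is the colours in $S$. After step 1 fails, they are only $|W|/(4b)$-bounded, not $|W|/(Cc^3)$-bounded, so the $S$-coloured graph can still have constant density and spoil rainbowness. What I would try first is to exploit that this graph has at most $b-2$ colours and maximum degree below $|W|/4$. I would pass to a large subset $W'\subseteq W$ on which the $S$-coloured graph is sparse, for instance by applying the repeat rule with a threshold $|W|/\mathrm{poly}(c)$ only a bounded (in $b$) number of times before each new-colour step, and pay the polynomial loss only $O_b(1)$ times. After that I would allow a rainbow $K_c$ to use each $S$-colour at most once, running the same random-sampling argument while treating $S$-edges as forbidden pairs. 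If this bookkeeping fails, the fallback is to note that a dense $(b-2)$-coloured graph forces, by a Ramsey/supersaturation argument, monochromatic cliques of linear-in-$a$ size at only $2^{O_b(a)}$ cost. This case could then be absorbed into the monochromatic outcome.
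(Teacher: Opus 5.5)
Your chain bookkeeping (palette of size at most $b-2$, each colour used at most $a-2$ times, total loss $2^{O_b(a)}$) is fine. So is your rainbow extraction when \emph{every} colour is locally $|W|/(Cc^3)$-bounded. But there is a genuine gap exactly at the point you call the main obstacle, and neither remedy you suggest works.

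Sampling a random $2c$-set and deleting conflicts needs the $S$-coloured graph on $W$ to have density $O(1/c)$. After step 1 fails, that graph may still have constant density. Your first remedy, sparsifying by repeat steps at threshold $|W|/\mathrm{poly}(c)$, is not limited to $O_b(1)$ steps. Suppose the $\chi_1$-coloured graph on $W$ looks like $G(|W|,c^{-1/2})$ and all other edges get distinct colours. Then every $\chi_1$-neighbourhood looks the same again, so each such step costs $c^{1/2}$ without making anything sparser. You would pay $c^{\Theta(a)}$, which is no better than the general $c^{O(ab)}$ bound for fixed $b$. Your fallback is false: density of the $S$-coloured graph does not force cliques. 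It need not even contain a triangle, and in this example the largest $\chi_1$-clique has size $O_b(a/\log c)$ when $|W|=2^{O_b(a)}$. The correct output there is a rainbow $K_c$ avoiding $\chi_1$, and your procedure has no way to find it.

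The missing idea, which is the heart of the paper's proof, is to build the rainbow clique one vertex at a time. After each rainbow vertex $v$, restrict to the vertices joined to $v$ by colours \emph{outside} $S$. Interleave this with repeat steps whenever some $S$-colour is $\frac{1}{|S|+1}$-popular in the current set. When no $S$-colour is that popular, the non-$S$ neighbourhood of any vertex keeps about a $\frac{1}{|S|+1}$-fraction of the set. This is then just the greedy argument for $\operatorname{R}(a-1,\dots,a-1,c)$, with the non-$S$ edges treated as one extra colour. After $|S|(a-2)+c$ steps you get either $a-1$ repeat vertices of one colour, or $c$ vertices spanning only non-$S$ colours. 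The total cost is $b^{O(ab)}=2^{O_b(a)}$ because $c\le a$.

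To make those $c$ vertices rainbow, the paper picks each one uniformly at random. It then takes a union bound, via the chain rule, over monochromatic cherries and same-coloured disjoint pairs. The sets these vertices are drawn from have shrunk by a factor up to $b^{O(ab)}$. So the non-$S$ colours must be unpopular at a threshold like $\delta=b^{-4ab}$, relative to the set at the start of this phase, not at $1/(Cc^3)$. Your new-colour threshold must therefore drop to $2^{-\Theta_b(a)}$, which is affordable since there are at most $b-2$ new-colour steps. As in the paper, all new-colour steps should be finished before the rainbow phase begins. With these changes the argument gives the bound $b^{4ab^2}$.
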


Somewhat interestingly, this bound does not depend on $c$. Its proof employs an iterative process which is in part random to deal with rainbow cliques, and uses the multicolour Ramsey theorem for monochromatic cliques.

Observe that for $\min\{a,b,c\}\le 2$, the value of $\offerado{a}{b}{c}$ is at most $2$. On the other hand, all the results so far assumed $b\ge 4$ (and $a,c\ge 3$). This is because the case $b=3$ is essentially different and the lower bound of Proposition \ref{prop-offerado-general} fails for it. If $b=3$, then the colouring does not contain any triangles using exactly two colours, and so all colour classes are unions of cliques. In this case, the upper bound of $O(ac^3/\ln c)$ was already given in \cite{araujo2024off}, and the straightforward lower bound is of the form $(a-1)(c-2)+1$. Our main contribution here is the following result which shows that for $a$ large in terms of $c$, the lower bound is actually sharp up to an additive error depending only on $c$.

\begin{theorem}\label{thm-offerado-b3-upper}
    For all positive integers $a,c\ge 3$ we have $\offerado{a}{3}{c}\le (a-1)(c-2)+O(c^6)$.
\end{theorem}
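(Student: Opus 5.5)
The plan rests on a structural observation. If the colouring of $K_n$ has no lexicographic triangle, then every triangle is monochromatic or rainbow. Hence whenever $\chi(uv)=\chi(vw)$ we also have $\chi(uw)=\chi(uv)$, so every colour class is a vertex-disjoint union of cliques. With no monochromatic $K_a$, each such clique (a \emph{colour clique}) has at most $a-1$ vertices. Fix a threshold $K=c^3$ and choose a maximal family $Q_1,\dots,Q_m$ of pairwise vertex-disjoint monochromatic cliques, each of size exactly $K$. Write $R=V(K_n)\setminus\bigcup_i Q_i$. I would then split according to the value of $m$.

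\textbf{Step 1: rainbow cliques inside $R$.} By maximality, every colour clique meets $R$ in fewer than $K$ vertices. I build a rainbow clique in $R$ greedily. Suppose a rainbow set $S$ with $|S|=s<c$ is already chosen. A vertex $v\in R\setminus S$ cannot be added only in one of two situations:
\begin{itemize}
\item[(i)] $\chi(vx)=\chi(vy)$ for some $x,y\in S$. Then $v$ lies in the colour clique of $xy$.
\item[(ii)] $\chi(vx)=\chi(yz)$ for some $x\in S$ and edge $yz\subseteq S$. Then $v$ lies in the unique clique of colour $\chi(yz)$ that contains $x$.
\end{itemize}
Each such pair, or pair (vertex, edge), excludes fewer than $K$ vertices of $R$. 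So at most $O(c^3K)=O(c^6)$ vertices are excluded, and $|R|\ge Cc^6$ lets us continue until $s=c$.

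\textbf{Step 2: using the $Q_i$ as a reservoir of representatives.} Suppose $m\ge c-1$. I would build a rainbow clique by choosing one representative $q_i\in Q_i$ for $c-1$ of the cliques. If $m\ge c$, I take a $c$-th representative as well. If $m=c-1$, I instead start from one vertex $r\in R$, which exists once $n$ exceeds $(a-1)(c-1)$.

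The key point is that a colour clique of colour $\gamma\ne\chi(Q_i)$ meets $Q_i$ in at most one vertex. Also, no outside vertex $w$ can send an edge of colour $\chi(Q_i)$ into $Q_i$, because then $w$ would lie in the colour clique containing $Q_i$. I have to be careful here, since that clique may be larger than $Q_i$. If it is, I would replace $Q_i$ in the argument by the full colour clique $\hat Q_i\supseteq Q_i$; these still have size at most $a-1$, and I would make sure the cliques are chosen as whole colour cliques so that disjointness and the count below survive.

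Given this, the same case analysis (i)--(ii) as in Step 1 shows that, at each stage, the partial choice forbids at most $O(c^3)$ vertices of the next $Q_j$. Since $|Q_j|=K=c^3$ with a large enough constant, a valid representative always remains. This yields a rainbow $K_c$.

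\textbf{Step 3: counting.} In the remaining case $m\le c-2$, the cliques cover at most $(a-1)(c-2)$ vertices. So if $n\ge (a-1)(c-2)+Cc^6$, we get $|R|\ge Cc^6$, and Step 1 applies.

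I expect the main obstacle to be the bookkeeping in Step 2, specifically reconciling two things. First, I need the maximal family to consist of size-$K$ subcliques, so that maximality gives the bound $<K$ on colour cliques inside $R$. Second, I need each $\hat Q_i$ to be a full colour clique, so that edges from outside never share its colour. The cleanest fix I would try first is this: take the $Q_i$ to be whole colour cliques of size at least $K$, chosen maximally and disjoint. Then bound the part of any colour clique lying in $R$ by noting that a colour clique meeting $R$ in at least $K$ vertices is disjoint from all $Q_i$, since distinct colour cliques of the same colour are disjoint. The remaining worry is two cliques of different colours overlapping. For that case I would choose the family greedily so that overlaps are forced to be in single vertices, and then delete those $O(c^2)$ vertices.
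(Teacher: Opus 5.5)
Your outline matches the paper's: peel off large monochromatic cliques, show that at most $c-2$ are removed, and run the greedy count of Proposition~\ref{prop-upper-b3-greedy}, with threshold $K$ in place of $a$, on what remains. You also correctly see the tension. Step~1 needs $R$ to contain no monochromatic $K_K$, while Step~2 needs that no vertex outside $Q_i$ sends an edge of colour $\chi(Q_i)$ into $Q_i$. However, neither of the two concrete families you write down delivers both properties.

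\textbf{First gap: the choice of cliques.}
\begin{itemize}
\item With disjoint subcliques of size exactly $K$, Step~2 is false. Take the extremal colouring: $c-2$ disjoint red $K_{a-1}$'s, all other edges in distinct colours, with $a-1\ge cK$. Every maximal family then has $m\ge c$, yet this colouring contains no rainbow $K_c$. Equivalently, Steps~1--3 would give $\offerado{a}{3}{c}=O(c^6)$ independently of $a$, since $m\le c-2$ such cliques cover only $(c-2)K$ vertices.
\item With a maximal disjoint family of whole colour cliques, Step~1 fails. A colour clique $P$ of size at least $K+1$ may meet a chosen $Q_i$ of a \emph{different} colour in a single vertex. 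Then $P$ cannot be added to the family, yet $P\setminus Q_i\subseteq R$ has at least $K$ vertices. Your disjointness remark only handles $Q_i$ of the same colour.
\end{itemize}
Your last sentence gestures at the right fix, which is the paper's: repeatedly remove a maximal monochromatic clique $S_j$ of size at least $K$ \emph{in the current graph}. This gives the following.
\begin{itemize}
\item The leftover set $R$ has no monochromatic $K_K$ by construction, and each $|S_j|\le a-1$.
\item Maximality at the time of choice means no vertex of a later $S_{j'}$, or of $R$, sends a $p_j$-coloured edge into $S_j$.
\item At most one vertex of $S_j$ lies in the colour-$p_{j'}$ clique of a later $S_{j'}$. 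Two such vertices would force $p_j=p_{j'}$ and make $S_j\cup S_{j'}$ monochromatic, contradicting maximality of $S_j$.
\end{itemize}
Deleting these at most $c-2$ vertices from each of the first $c-1$ cliques gives exactly the property your Step~2 count uses.

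\textbf{Second gap: the case $m=c-1$.} Here you use a vertex $r\in R$, justified by $n>(a-1)(c-1)$. But $n\ge (a-1)(c-2)+Cc^6$ does not imply this once $a-1>Cc^6$, which is exactly the regime where the theorem has content. Indeed, $c-2$ cliques of size $a-1$ together with one of size about $Cc^6$ can exhaust the vertex set, leaving $R=\emptyset$.

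The fix, used in the paper, is to take two vertices $v_0,v_1$ from the same cleaned clique $S_1'$. The colour $p_1=\chi(v_0v_1)$ appears on no edge between distinct cleaned cliques. For $v$ in another cleaned clique, $\chi(vv_0)=\chi(vv_1)$ would, by transitivity, equal $\chi(v_0v_1)=p_1$, which is impossible. So $c-1$ cleaned cliques already yield a rainbow $K_c$, with no vertex of $R$ needed.
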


The proof relies on a variation of the greedy process that shows the straightforward $O(ac^3)$ upper bound. When $a$ is large, say $a=\Omega(c^5)$, we have that $\offerado{a}{3}{c}\leq O(ac)$. The bound of this shape, however, cannot be hoped for for all $a$, because we also show that $\offerado{a}{3}{c}\ge \Omega(ac^2)$ for all $a\le c^2$.

Finally, let us mention some related problems that have been studied in literature. Let $\unorderado{a}{c}$ denote the minimum $n$ such that any colouring of $E(K_n)$ admits an \emph{orderable} $K_a$ or a rainbow $K_c$, where the former is a clique whose vertices can be ordered in such a way that any two edges agree in colour if they share the lower endpoint. (Note that monochromatic and lexicographic cliques are special cases of orderable cliques, and in fact, one could retrieve the definition of lexicographic cliques by imposing ``if and only if'' instead of ``if'' in the previous definition.) These are usually called unordered canonical Ramsey numbers, and were first studied by Richer \cite{richer2000unordered}. It is now known that $\unorderado{a}{c}=\Theta(c^3/\ln c)^{a-2}$, with the corresponding lower and upper bounds shown by Jiang \cite{jiang2009canonical} and Araujo and Peng \cite{araujo2024off}, respectively. 

Another interesting problem asks what canonical subsets one can find in a colouring that satisfies some local constraints. For example, we say that colouring $\chi:E(G)\to\N$ is \emph{$m$-good} if for every vertex, every colour appears at most $m$ times among edges incident to it (that is, maximum monochromatic degree of $G$ is at most $m$). In \cite{alon2003properly}, the authors studied, given positive integer $m$ and graph $H$, the function $g(m,H)$ denoting the minimum $n$ such that every $m$-good colouring of $E(K_n)$ admits a rainbow copy of $H$. In particular, they showed that for $H=K_t$, one has $g(m,H)=\Theta(mt^3/\ln t)$, and in general for a graph $H$ on $t$ vertices with average degree $d$ and maximum degree $\Delta$ it holds that $\Omega(\frac{md^2t}{\ln t+\ln m})\le g(m,H)\le 2m\Delta^2t+32m\Delta^4+4t$. In Section \ref{subsec-avoiding-lex-triang} we will discuss in more detail how this problem relates to off-diagonal Erd\H{o}s-Rado numbers.

\subsection{Paper organisation}\label{subsec-paper-org}

The rest of the paper is organised as follows. In Section \ref{sec-offerado} we study off-diagonal canonical Ramsey numbers. In the first part of the section we study bounds on $\offerado{a}{b}{c}$ for $a,c\ge 3$, $b\ge 4$, while in Section \ref{subsec-avoiding-lex-triang} we study in more detail the case $b=3$. Section \ref{sec-tri-erado} is then dedicated to tripartite canonical Ramsey numbers. Finally, in Section \ref{sec-concl} we make some concluding remarks and discuss possibilities for further development.

\section{Off-diagonal canonical Ramsey numbers}\label{sec-offerado}

In the remainder of the paper we will say that under a colouring $\chi:E(G)\to\N$ of a graph $G$ (which is often implicit), a colour $p$ is $\delta$-popular at a vertex $v$ for some $0<\delta<1$ if $v$ has at least $\delta|V(G)|$ incident edges coloured in $p$.

Recall from Definition \ref{def-offerado} that given positive integers $a,b,c$, we denote by $\offerado{a}{b}{c}$ the minimum integer $n$ such that any colouring of $E(K_n)$ admits a monochromatic $K_a$, a lexicographic $K_b$ or a rainbow $K_c$. When $a=b=c=t\ge 4$, it follows from (\ref{eq-lef-rodl-bounds}) that $2^{\Omega(t^2)}\le\eradograph{t}\le t^{O(t^2)}$. In fact, one can extend the original proof of this result due to Lefmann and R\"odl \cite{lefmann1995erdHos} to obtain similar bounds on $\offerado{a}{b}{c}$ whenever $a,c\ge3$, $b\ge4$, as claimed by Proposition \ref{prop-offerado-general}.

\begin{proof}[Proof of Proposition \ref{prop-offerado-general}]
    For the lower bound, let $n=\operatorname{R}(\min\{b-2,c(c-1)/2-1\};a)$ be the multicolour diagonal Ramsey number, so that there exists a colouring of $K_{n-1}$ in $\min\{b-2,c(c-1)/2-1\}$ colours with no monochromatic $K_a$. Since in this colouring there are also fewer colours than we need for a lexicographic $K_b$ or a rainbow $K_c$, we have $\offerado abc\ge n$, so the lower bound follows from the known bound for multicolour diagonal Ramsey numbers (see e.g. \cite{conlon2021lower}).

    Now for the upper bound, let $\delta=c^{-4}$ and $n=\delta^{-ab}=c^{4ab}$, and let $\chi:E(K_n)\to\N$ be an arbitrary colouring of $K_n$. Set $G=K_n$ and repeat the following process as long as possible: choose a vertex $v\in V(G)$ such that there is a colour $p$ that is $\delta$-popular in $G$ and restrict $G$ only to the $p$-coloured neighbourhood of $v$. Let $v_1,\dots,v_k$ be the chosen vertices (in this order), and $p_1,\dots,p_k$ the corresponding colours. 

    Suppose first that the process takes $k\ge ab$ steps. Then by the pigeonhole principle, in the set $\{p_1,\dots,p_{ab}\}$ there are either $a$ colours $p_{i_1},\dots,p_{i_{a}}$, $i_1<\dots< i_{a}$, that are all equal or $b$ colours $p_{j_1},\dots,p_{j_{b}}$, $j_1<\dots<j_{b}$, that are all different. In the first case, vertices $v_{i_1},\dots,v_{i_a}$ form a monochromatic clique. Similarly, in the second case, vertices $v_{j_1},\dots,v_{j_b}$ (in that order) form a lexicographic clique.

    Suppose instead that the process terminates before completing $ab$ steps. Then in the remaining $G$ we have $m:=|V(G)|\ge 1/\delta$ and no vertex has a $\delta$-popular colour. Choose uniformly at random a subset $S$ of $c$ vertices from $G$. Note that this set is rainbow-coloured if and only if there is no pair of adjacent edges sharing a colour and no pair of disjoint edges sharing a colour.
    
    Now given any three vertices that form a monochromatic pair of adjacent edges, the probability that they are all chosen in $S$ is $\frac{c(c-1)(c-2)}{m(m-1)(m-2)}\le(c/m)^3$, and there are at most $\frac{1}{2}m^2\cdot\delta m=\frac{\delta}{2}m^3$ such pairs. Similarly, given any four vertices that form a monochromatic pair of disjoint edges, the probability that they appear in $S$ is at most $(c/m)^4$, and there are at most $\frac{\delta}{8}m^4$ such pairs. In total, the expected number of these obstructing pairs of edges spanned by $S$ is at most $\frac{\delta}{2}c^3+\frac{\delta}{8}c^4\le \frac{1}{6}+\frac{1}{8}<1$, so there is a choice of $S$ for which no obstructing pair exists, and so $S$ spans a rainbow $K_c$. Since the colouring $\chi$ was taken arbitrarily, this completes the proof.
\end{proof}

In this section, we study the behaviour of $\offerado{a}{b}{c}$ under different growth regimes of parameters $a,b,c$. More precisely, we want to know which of the two bounds in Proposition \ref{prop-offerado-general} is closer to the truth when either $a$ or $b$ is fixed. Note that the case when $c$ is fixed is in fact of no particular interest, as then the two bounds match up to a constant in the exponent. Indeed, Axenovich and Jamison \cite{axenovich2005canonical} showed that $2^{a(b-2)/4}\le \offerado{a}{b}{3}\le \offerado{a}{b}{c}$, matching the general upper bound $\offerado{a}{b}{c}\le c^{O(ab)}=2^{O_c(ab)}$ for a fixed $c\ge 3$.

So let us first consider the case when $b$ is fixed. A different multicolour Ramsey construction than before will give us the desired improvement to the lower bound.

\begin{proposition}\label{prop-lower-offerado-multiram}
    Let $a\ge3$, $b\ge 4$ and $c\ge 2b-4$ be integers. Then $\offerado abc\ge \operatorname{R}(\underbrace{a,\dots,a}_{b-3},c-2b+6)$.
\end{proposition}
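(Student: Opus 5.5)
We would prove this by an explicit construction built from an extremal multicolour Ramsey colouring, in the same spirit as the lower bound in Proposition \ref{prop-offerado-general}. Set $s=c-2b+6\ge 2$ and $N=\operatorname{R}(a,\dots,a,s)-1$, with $b-3$ copies of $a$. By definition of $N$, there is a colouring of $E(K_N)$ in $b-2$ classes $C_1,\dots,C_{b-3},C_*$ such that:
\begin{itemize}
\item no class $C_i$ with $i\le b-3$ contains a monochromatic $K_a$;
\item $C_*$ contains no clique $K_s$.
\end{itemize}
We then define $\chi$ by giving every edge of $C_i$ the colour $i$, and giving each edge of $C_*$ its own new colour, distinct from all other colours. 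It remains to show that $\chi$ has no monochromatic $K_a$, no lexicographic $K_b$ and no rainbow $K_c$. This gives $\offerado abc>N$, as required.

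\textbf{No monochromatic $K_a$.} Since $a\ge 3$, a monochromatic $K_a$ has at least two edges of the same colour. So it cannot use a colour from $C_*$, since each such colour appears exactly once. It therefore lies inside some $C_i$, which is excluded by the choice of the colouring.

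\textbf{No lexicographic $K_b$.} Suppose $v_1<\dots<v_b$ is a lexicographic $K_b$. For each $i\le b-2$, the edges $v_iv_j$ with $j>i$ form a colour class of size $b-i\ge 2$. These $b-2$ colours are pairwise distinct. Each of them is repeated, so none of them can be a $C_*$-colour. Hence they all lie in $\{1,\dots,b-3\}$, which is impossible by pigeonhole.

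\textbf{No rainbow $K_c$.} Let $S$ span a rainbow $K_c$. Each colour $1,\dots,b-3$ appears at most once in $S$, so at most $b-3$ edges of $S$ lie outside $C_*$. Delete one endpoint of each such edge. The remaining set has at least $c-(b-3)\ge c-2b+6=s$ vertices, and all of its edges lie in $C_*$. This gives a $K_s$ inside $C_*$, a contradiction.

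We do not expect any step to be a real obstacle. The only point needing care is the lexicographic case: the argument relies on the observation that the $b-2$ lexicographic colour classes of size at least $2$ must all be drawn from the $b-3$ repeated colours. The bound $c-(b-3)\ge s$ in the rainbow case is slightly stronger than needed, so the stated threshold $c-2b+6$ holds with room to spare.
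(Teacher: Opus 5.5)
Your proof is correct and matches the paper's: the same construction (recolour the last Ramsey class so each edge gets a unique colour) and the same three checks, with the lexicographic case resting on the same pigeonhole over the $b-3$ colours that can repeat. The one difference is the rainbow step. The paper deletes every vertex incident to a non-recoloured edge, up to $2b-6$ vertices, whereas you delete one endpoint per such edge, at most $b-3$ vertices; this is why your count $c-(b-3)$ exceeds the required $c-2b+6$, and it in fact shows the construction gives $\offerado abc\ge \operatorname{R}(a,\dots,a,c-b+3)$.
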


\begin{proof}
    Let $n = \operatorname{R}(a,\dots,a,c-2b+6)$, so that there exists a colouring of $G=K_{n-1}$ in $b-2$ colours with no monochromatic $K_a$ in the first $b-3$ colours and no monochromatic $K_{c-2b+6}$ in the last colour (let this colour be red). Now recolour all red edges in $G$ such that no new colour appears more than once in the graph, and call these colours ex-red. No monochromatic $K_a$ in $G$ can use an ex-red edge, so no $K_a$ is created by this recolouring. Moreover, since every ex-red colour appears exactly once, any lexicographic $K_b$ can contain at most one ex-red edge (as the edge between the two lexicographically largest vertices), so with the remaining $b-3$ colours, the largest lexicographic clique it can contain is of size $b-1$. Finally, let $H$ be a largest rainbow clique in $G$. Remove from $H$ all vertices that are incident to a non-ex-red edge. Since there are $b-3$ non-ex-red colours and each of them appears at most once in $H$, we removed at most $2b-6$ vertices. But then all edges in the remaining clique are ex-red, that is, before recolouring the clique was red monochromatic, so it has size at most $c-2b+5$. Therefore, $H$ has size at most $c-2b+5+2b-6=c-1$, so there is no rainbow $K_c$ in $G$ either. 
\end{proof}

When $a$ is fixed and $c$ grows, with $c\ge 4b$, one can use the standard method of random blow-ups (see, for example, \cite{wigderson2021improved}) to show that $\operatorname{R}(a,\dots,a,c-2b+6)\ge (c-2b+6)^{\Omega(ab)}=c^{\Omega(ab)}$, which matches the upper bound up to a constant in the exponent. Note that this construction does not permit $b$ to be too large in terms of $c$. The following proposition, however, will allow $b$ to grow irrespective of $a,c$ and still give a comparable lower bound. Note that a similar construction was already used by Jiang \cite{jiang2009canonical}, and later adapted to the $a=3$ case by Araujo and Peng \cite{araujo2024off}. We include the proof for completeness.

\begin{proposition}\label{prop-lower-offerado-blowup}
    For all positive integers $a,c\ge 3$, $b\ge4$, and all $2\le \ell\le b-2$, we have $\offerado abc> (\offerado{a}{b-\ell+1}{c}-1)(\offerado{a}{\ell+1}{c}-1)$.
\end{proposition}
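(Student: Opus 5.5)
We use a blow-up (substitution) construction. Let $N_1=\offerado{a}{b-\ell+1}{c}-1$ and $N_2=\offerado{a}{\ell+1}{c}-1$. By definition there is a colouring $\chi_1$ of $K_{N_1}$ with no monochromatic $K_a$, no lexicographic $K_{b-\ell+1}$ and no rainbow $K_c$. Similarly, there is a colouring $\chi_2$ of $K_{N_2}$ with no monochromatic $K_a$, no lexicographic $K_{\ell+1}$ and no rainbow $K_c$. We make the palettes of $\chi_1$ and $\chi_2$ disjoint.

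Take the vertex set $[N_1]\times[N_2]$. Colour the pair $(x,y),(x',y')$ with $\chi_1(xx')$ if $x\neq x'$, and with $\chi_2(yy')$ if $x=x'$. In words, each vertex of the outer colouring is replaced by a copy of the inner colouring, all inner copies use the same palette, and edges between blobs inherit the outer colour. The aim is to show that this colouring of $K_{N_1N_2}$ contains none of the three forbidden structures.

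Let $S$ be a clique in the construction and let $T$ be the set of blobs it meets, i.e. its projection onto the first coordinate.

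\emph{Monochromatic $K_a$.} Suppose $S$ is monochromatic. Since the palettes are disjoint, either all its edges are outer or all are inner. If they are outer, $S$ has at most one vertex per blob and its projection is a monochromatic $K_a$ under $\chi_1$. If they are inner, $S$ lies in one blob and is a monochromatic $K_a$ under $\chi_2$. Both are impossible.

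\emph{Rainbow $K_c$.} Suppose $S$ is rainbow. Two blobs $x\neq x'$ each containing at least two vertices of $S$ would give four outer edges of the same colour $\chi_1(xx')$, so this cannot happen. Likewise two blobs each containing two vertices of $S$ would give two inner edges; if both pairs have the same second coordinates, these edges share a colour. This observation is not needed, however. Apply it only within one blob: if $|T|\ge 2$ and some blob contains two vertices $u,u'$ of $S$, then for any vertex $w$ of $S$ in another blob, $wu$ and $wu'$ have the same outer colour. So either $S$ lies in a single blob, giving a rainbow $K_c$ in $\chi_2$, or $S$ has one vertex per blob, giving a rainbow $K_c$ in $\chi_1$. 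Both are impossible.

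\emph{Lexicographic $K_b$.} This is the main step. Let $S$ be lexicographic with ordering $v_1<\dots<v_b$. The vertices of $S$ inside one blob, taken in the induced order, form a lexicographic clique under $\chi_2$. Hence each blob contains at most $\ell$ vertices of $S$.

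Next choose one representative from each blob of $T$, namely the vertex of $S$ in that blob that is earliest in the ordering. These representatives form a lexicographic clique in the induced order, and their projections give a lexicographic clique under $\chi_1$ of size $|T|$. It follows that $|T|\le b-\ell$.

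Now consider the vertices in any blob other than the blob $B$ containing $v_b$. Let $v_i$ be a vertex of $S$ in such a blob, and suppose $i<b-1$. Every edge $v_iv_j$ with $j>i$ has the same colour. Since $v_b$ lies in a different blob, this colour is outer. So all later vertices $v_j$ lie outside the blob of $v_i$, and each blob other than $B$ contains a single vertex of $S$, apart from a possible final pair $v_{b-1},v_b$ lying in the same blob.

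Count the vertices. Let $m$ be the number of vertices of $S$ in $B$. The other $|T|-1$ blobs each contain one vertex of $S$, so
\[
b=(|T|-1)+m\le (b-\ell-1)+\ell=b-1,
\]
a contradiction.

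The delicate part is this lexicographic case. The pitfall is the last pair $v_{b-1},v_b$, whose colour is unconstrained; I expect this edge-case bookkeeping to be the main obstacle. It has to be checked that the bounds $m\le\ell$ and $|T|\le b-\ell$ really hold, using the fact that lexicographic cliques still contain no forbidden pattern when the last edge is ignored.
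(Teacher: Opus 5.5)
Your proof is correct and follows the paper's blow-up construction and three-case analysis essentially verbatim. The one difference is in the lexicographic case: you show every blob other than the one containing $v_b$ holds a single vertex of $S$, using the outer colour of $v_iv_b$, whereas the paper notes that two blobs each with two vertices give four non-orderable vertices. Your restriction to $i<b-1$ and the caveat about a final pair $v_{b-1},v_b$ are unnecessary, since the argument works for every $i<b$ and such a pair, if in one blob, lies in $B$ anyway.
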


\begin{proof}
    Let $G'$ be a complete graph on $\offerado{a}{\ell+1}{c}-1$ vertices with an edge colouring that contains no monochromatic $K_a$, no lexicographic $K_{\ell+1}$, and no rainbow $K_c$. Similarly, let $G''$ be a complete graph on $\offerado{a}{b-\ell+1}{c}-1$ vertices with an edge colouring that contains no monochromatic $K_a$, no lexicographic $K_{b-\ell+1}$, and no rainbow $K_c$. Moreover, we can ensure that the colour sets of $G'$ and $G''$ are disjoint. Now construct a new graph $G$ by replacing each vertex of $G''$ with a copy of $G'$, and colouring all edges between different copies of $G'$ with the same colour as the corresponding edge in $G''$. Then $G$ has $(\offerado{a}{b-\ell+1}{c}-1)(\offerado{a}{\ell+1}{c}-1)$ vertices, and we claim that it contains no monochromatic $K_a$, no lexicographic $K_b$, and no rainbow $K_c$.

    As for monochromatic cliques, note that they are either completely contained in a copy of $G'$, or they contain at most one vertex from each copy of $G'$, for otherwise they would contain at least two different colours, the colour sets of $G'$ and $G''$ being disjoint. In either case, a monochromatic clique can be of size at most $a-1$. 

    Similarly for rainbow cliques, they are either completely contained in a copy of $G'$, or they contain at most one vertex from each copy of $G'$, for otherwise the clique would contain at least two edges between two different copies of $G'$, which are all of the same colour by definition. Therefore again a rainbow clique can be of size at most $c-1$.

    Finally, a lexicographic clique cannot contain at least two vertices in at least two different copies of $G'$, for otherwise these four vertices could not be lexicographically orderable. Therefore, any lexicographic clique contains at most one vertex from each copy of $G'$, except one of them where it can contain multiple vertices. Note that if we replaced these multiple vertices with a single vertex, we would get a lexicographic clique in $G''$, which is of size at most $b-\ell$. Moreover, the induced subclique in the copy of $G'$ where we have multiple vertices is lexicographic in $G'$, so it is of size at most $\ell$. Therefore, any lexicographic clique in $G$ is of size at most $(b-\ell)-1+\ell=b-1$, so we are done. 
\end{proof}

Now for $a$ fixed, repeated application of the previous proposition with $\ell=3$ gives $\offerado{a}{b}{c}\ge \offerado{a}{4}{c}^{\Omega(b)} \ge \offram{a}{c-2}^{\Omega(b)} \ge c^{\Omega(ab)}$, which matches the upper bound up to a constant in the exponent. In fact, this remains true irrespective of the values of $a$ and $b$, as long as $c$ is large enough in terms of $a$, namely $c>a^{1+\eta}$ for arbitrary $\eta>0$, with the implied constant in the exponent depending on $\eta$ (one can see this, for example, by considering random graph $G(n,p)$ with $n=c^{\Theta(a)}$ and $p=\Theta(\frac{a\log c}{c})$). This gives Theorem \ref{thm-offerado-large-c}.

We now turn our attention to the case when $c\le a$.

\begin{theorem}\label{thm-upper-offerado-small-c}
    For all positive integers $a,b,c\ge 3$ with $c\le a$, we have $\offerado abc \le b^{4 ab^2}$.
\end{theorem}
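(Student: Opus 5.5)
Our plan is to rerun the greedy process from the proof of Proposition \ref{prop-offerado-general}, but with a popularity threshold that depends only on $b$. Set $\delta=b^{-4b}$ (or any threshold of the form $b^{-O(b)}$) and $n=b^{4ab^2}=\delta^{-ab}$. Starting from $G=K_n$, as long as some vertex $v$ of the current $G$ has a $\delta$-popular colour $p$, we record the pair $(v,p)$ and pass to the $p$-coloured neighbourhood of $v$.

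\textbf{Case 1: the process runs for $ab$ steps.} Exactly as in Proposition \ref{prop-offerado-general}, pigeonhole on the colour sequence $p_1,\dots,p_{ab}$ gives one of two outcomes. Either $a$ equal colours occur, which yields a monochromatic $K_a$. Or $b$ pairwise distinct colours occur, which yields a lexicographic $K_b$.

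\textbf{Case 2: the process stops early.} Then we are left with a set $G$ of $m\ge \delta^{-1}$ vertices in which every colour class at every vertex has degree less than $\delta m$. Here we cannot simply sample $c$ random vertices as before, because $\delta$ is not small compared with $c^{-4}$. Instead we use the hypothesis $c\le a$: we aim to find either a rainbow $K_c$ or a monochromatic $K_a$, and the slack in the latter is what we exploit.

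We plan to build a rainbow clique greedily and randomly, one vertex at a time, maintaining a set $U_i$ of surviving candidates. Having chosen $u_1,\dots,u_i$, a candidate $w$ is discarded if any of the following holds:
\begin{itemize}
\item some edge $wu_j$ has the colour of an earlier edge $u_ju_k$;
\item two edges $wu_j$ and $wu_k$ share a colour;
\item $wu_j$ has the same colour as $w'u_k$ for the future choice $w'$.
\end{itemize}
The first kind of conflict removes at most $\binom{i}{2}\cdot i\cdot\delta m$ vertices, by the low-popularity condition.

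The second kind of conflict, a repeated colour inside the star from $w$ to $\{u_1,\dots,u_i\}$, is the dangerous one. Low popularity controls colour degrees, but not the number of pairs $(u_j,u_k)$ with $\chi(wu_j)=\chi(wu_k)$. To handle it, we choose each $u_{i+1}$ uniformly at random from $U_i$. We then show that the expected number of bad coincidences is $O(c^3\delta)$-ish relative to $|U_i|$, so the candidate set shrinks by at most a constant factor per step.

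\textbf{Main obstacle.} Since $\delta$ depends only on $b$ while $c$ can be as large as $a$, such a crude estimate only lets us run for roughly $\delta^{-1/3}$ steps, which is far short of $c$. To fix this we would allow the clique-building phase to fail and, when it does, restart the popular-colour process inside the failure set. The idea is that a failure certifies many vertices $w$ sending two edges of equal colour into a small set; by pigeonhole this produces a colour that is popular relative to a subset of size polynomially smaller. We then charge such restarts against the budget of $ab$ popular steps. Concretely, we would redefine popularity relative to the current set, use threshold $1/b^{4b}$ at each stage, and account for each shrinkage by a factor $b^{-4b}$. The total shrinkage is then at most $b^{-4ab^2}$ over $ab$ steps, which matches $n$.

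The step we expect to be hardest is making this restart argument produce genuine popular steps. We would first try to prove a lemma of the following form: in a set with no $\delta$-popular colours, either a uniformly random vertex set of size $c\le a$ is rainbow with positive probability, or some vertex has a colour class of size at least $\delta^{2}m$ inside a subset. This lemma is where $c\le a$ is needed, since it is what makes a monochromatic $K_a$ sufficient in the fallback.
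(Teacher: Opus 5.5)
Your Case~1 is fine. Case~2, however, is the whole difficulty, and your plan does not close it.

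Because $\delta=b^{-4b}$ does not depend on $a$, the low-popularity condition alone cannot support sampling anywhere near $c$ vertices. As you note, your own bound for the first conflict type becomes vacuous once $i^3\delta\gtrsim 1$. So genuinely new structure has to be extracted, and the restart step does not supply it.
\begin{itemize}
\item Suppose the greedy rainbow step fails because many $w$ have $\chi(wu_\ell)$ equal to the colour of some edge $u_ju_k$. Pigeonholing over the $O(c^3)$ conflict types only gives a colour at some $u_\ell$ that is $\Omega(c^{-3})$-popular. A chain step built on it shrinks the current set by $c^{-O(1)}$. Charging up to $ab$ such steps therefore just returns the general bound $c^{O(ab)}$ of Proposition~\ref{prop-offerado-general}, not $b^{O(ab^2)}$.
\item Suppose instead it fails because $\chi(wu_j)=\chi(wu_k)$ for many $w$. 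The shared colour varies with $w$, and no colour is $\delta$-popular at $u_j$. So pigeonhole produces no popular colour at any single vertex.
\item The aborted clique-building phase itself loses a constant factor per step for up to $\delta^{-1/3}=b^{4b/3}$ steps before each restart. That is far more than the factor $b^{-4b}$ per restart in your accounting.
\end{itemize}
The lemma you propose is unproven. Its second alternative is too weak to feed back into a chain whose threshold depends only on $b$. Note also that your Case~2 throws away the colours $p_1,\dots,p_k$ already recorded, whereas the paper's argument is built around them.

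The missing idea is to treat \emph{new} colours and \emph{repeated} colours with different thresholds.
\begin{itemize}
\item The paper takes $\delta=b^{-4ab}$, which is tiny, but the popular-colour chain may only use colours different from all previous ones. Hence it runs at most $b-2$ steps and costs only $\delta^{b-2}$, which is affordable since $n=\delta^{-b}$.
\item If the chain completes $b-2$ steps with colour set $P$, the remaining set has size at least $\operatorname{R}(b-2;a-1)$. An edge there coloured outside $P$ closes a lexicographic $K_b$. Otherwise multicolour Ramsey gives a monochromatic $K_{a-1}$ in some $p_i$, which $u_i$ extends to a monochromatic $K_a$.
\item If the chain stops earlier, then in the remaining set $S$ only the at most $b-3$ colours of $P$ can be $\delta$-popular. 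One then performs $|P|(a-2)+c$ further steps. If some vertex has a $\frac{1}{|P|+1}$-popular colour from $P$, chain on it. This threshold depends only on $b$, and $a-1$ such steps in colour $p_i$ give a monochromatic $K_a$ together with $u_i$. Otherwise, choose a uniformly random vertex and pass to its neighbourhood in colours outside $P$, which keeps at least a $\frac{1}{|P|+1}$ fraction.
\end{itemize}
This last step is where $c\le a$ is used. There are only $O(ab)$ steps, so the total shrinkage is at least $b^{-ab}$. By the chain rule, the expected number of monochromatic pairs of edges among the first $c$ random vertices is at most $\frac{\delta}{2}b^{3ab}+\frac{\delta}{8}b^{4ab}<1$. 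Such pairs must use non-$P$ colours, and these are $\delta$-unpopular in $S$, which is what makes the bound work.
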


\begin{proof}
    Let $n=b^{4ab^2}$ and $\delta=b^{-4ab}$. Starting with the whole vertex set, we perform the following process while possible: at stage $i$, we choose a vertex $u_i$ with a $\delta$-popular colour $p_i$ such that $p_i$ is different from all $p_j$ with $j<i$. Then we replace the current set of vertices with the $p_i$-coloured neighbourhood of $u_i$. We execute at most $b-2$ steps of the process. Let $u_1,\dots,u_k$ be the chosen vertices, $P=\{p_1,\dots,p_k\}$ the set of their popular colours, and let $S$ be the set of the remaining vertices (i.e. the popular neighbourhood of all $u_i$'s).
    
    If $k=b-2$, then the size of $S$ is at least $n\delta^{b-2}\ge b^{8ab}\ge \operatorname{R}(b-2;a-1)$. Now suppose that there is an edge $uw$ in $S$ with a colour not in $P$. Then $u_1,\dots,u_{b-2},u,w$ form a lexicographic $K_b$, and we are done. Otherwise, all edges in $S$ have colours in $P$, so given that $|S|\ge \operatorname{R}(b-2;a-1)$, there is a monochromatic $K_{a-1}$ in $S$ with a colour $p_i\in P$, which forms a monochromatic $K_a$ together with $u_i$.

    If $k<b-2$, then the size of $S$ is at least $n\delta^{b-3}\ge b^{12ab}$ and no vertex in $S$ has a $\delta$-popular colour that is not in $P$. Now repeat the following process $|P|(a-2)+c$ times, starting with the whole vertex set $S$: if there is a vertex with a $\frac{1}{|P|+1}$-popular colour from $P$, take it and restrict the current set of vertices to this popular neighbourhood. Otherwise, choose a vertex $v_i$ from the current set of vertices uniformly at random and independently from previous choices, and restrict to the subset of vertices connected to $v_i$ via an edge of colour not in $P$. Since no vertex in the current set had a colour from $P$ as a $\frac{1}{|P|+1}$-popular, the restricted set has size that is at least a $\frac{1}{|P|+1}$-fraction of the current size. Note that we only index vertices $v_i$ chosen in the step of the latter type (random choice). Also note that it is possible to conduct all $|P|(a-2)+c$ choices since $\left(|P|+1\right)^{-|P|(a-2)-c}|S|\ge b^{-ab}b^{12ab}\ge b^{11ab}$.

    We claim that the choice of vertices $v_1,v_2,\dots$ can be carried out in such a way that any clique appearing among first $c$ of them is rainbow. To see that this claim implies the theorem, take any such choice of $v_1,v_2,\dots$. Since there were $|P|+1$ types of choices in total, $|P|$ for each colour from $P$ and one giving rise to $v_1,v_2,\dots$, by the pigeonhole principle either we have made at least $a-1$ of one of the first $|P|$ types, or we have made at least $c$ of the last type. In the former case, we have a monochromatic $K_a$ (together with an appropriate $u_i$), and we are done. In the latter case, we have (at least) $c$ vertices $v_1,\dots,v_c$ chosen in such a way to span a rainbow $K_c$, thus completing the proof.

    It remains to prove the claim. We would like vertices $v_1,v_2,\dots$ to form a rainbow clique, which is the case if and only if these vertices span no pair of adjacent edges sharing a colour and no pair of disjoint edges sharing a colour. These obstructions concern three and four vertices, respectively. So, given any possible obstruction of the former type, and given distinct integers $1\le i_1,i_2,i_3\le c$, we are interested in the probability that vertices of this obstruction are exactly $v_{i_1},v_{i_2},v_{i_3}$. Note that the choice of $i_j$'s might give an impossible assignment of vertices (i.e. with zero probability). However, given that choice of every new vertex is independent of previous choices, and that each choice is conducted (uniformly) from a set of size at least $(|P|+1)^{-|P|(a-2)-c}|S|$, by the chain rule this probability is bounded from above by
    $$\left( (|P|+1)^{|P|(a-2)+c}/|S| \right)^3\le b^{3a(b-2)}|S|^{-3}.$$

    Moreover, since there are $c(c-1)(c-2)<c^3$ choices of triples $1\le i_1,i_2,i_3\le c$, and at most $\frac{1}{2}|S|^2\cdot \delta|S|$ possible obstructions of the former type (owing to the fact that the choice of one edge determines the colour of the adjacent one, the order of the edges does not matter, and all the concerned edges have their colour outside $P$), by the union bound we conclude that the probability that a monochromatic pair of adjacent edges appears among first $c$ of vertices $v_1,v_2,\dots$ is at most
    $$\frac{\delta}{2}|S|^3\cdot c^3\cdot b^{3a(b-2)}|S|^{-3}\le \frac{\delta}{2}b^{3ab},$$
    using that $c\le a$. Similarly, probability that a monochromatic pair of disjoint edges appears among first $c$ of vertices $v_1,v_2,\dots$ is at most $\frac{\delta}{8}b^{4ab}$, so in total the probability that there is any obstruction to these vertices being rainbow is at most
    $$\frac{\delta}{2}b^{3ab}+\frac{\delta}{8}b^{4ab}<\frac{\delta}{4}b^{4ab}<1.$$
    In particular, with positive probability the choice of vertices $v_1,v_2,\dots$ can be carried out so that any clique appearing among first $c$ of them is rainbow, which is what we wanted.
\end{proof}

Theorem \ref{thm-offerado-fixed-b} now follows from the previous result when $b\ge 4$ is fixed. Combined with $\offerado{a}{b}{c} \ge 2^{\Omega_b(a)}$ from before, it gives $\offerado abc = 2^{\Theta_b(a)}$. That is, in this case the general lower bound is the optimal one and, somewhat surprisingly, the upper bound does not depend on $c$ at all (as long as $c\le a$ holds). More generally, the bound given by the previous theorem improves the upper bound $\offerado{a}{b}{c}\le c^{O(ab)}$ whenever $a\ge c>b^{\Omega(b)}$.

\subsection{Avoiding lexicographic triangles}\label{subsec-avoiding-lex-triang}

Note that all results of this section so far were meaningful only for $b\ge 4$. This is because for $b=3$ the situation is quite different and the lower bound of the form $2^{\Omega(a)}$ fails. In this regime, however, it is fairly easy to obtain $(a-1)(c-2)+1\le \offerado{a}{3}{c}\le O(ac^3)$.

For the lower bound, just take $c-2$ disjoint copies of monochromatic $K_{a-1}$, all coloured in red, and colour all the remaining edges in distinct non-red colours. Then clearly $K_{a-1}$ is the largest monochromatic clique, and any rainbow clique can have at most one vertex in any of the copies of $K_{a-1}$, except for one, where it can contain two, so the largest rainbow clique has size $(c-2)+1=c-1$. Finally, there are no lexicographic triangles, since any of them would need to have two edges in red (as all other colours are unique), forcing the third edge to be red as well.

Before we comment on the upper bound, let us mention some general observations that we will find useful in this setting. Supposing there is no lexicographic triangle in a given colouring of $K_n$, note that every colour is transitive, that is, for any three vertices $u,v,w$, if $uv$ and $vw$ have the same colour, then $uw$ must also have that colour. In particular, any colour-connected component is a monochromatic clique, whence in a colouring with no monochromatic clique of size $a$, maximum colour-degree of any vertex is at most $a-2$. The upper bound is now obtained by greedily increasing the size of a rainbow clique, as follows.

\begin{proposition}\label{prop-upper-b3-greedy}
    For all positive integers $a,c\ge 3$, we have $\offerado{a}{3}{c}\le\frac{1}{2}ac^3$.
\end{proposition}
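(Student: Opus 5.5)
We argue by contradiction: fix a colouring of $K_n$ with $n>\frac{1}{2}ac^3$ that has no monochromatic $K_a$ and no lexicographic triangle, and build a rainbow $K_c$ greedily. By the observation preceding the statement, every colour is transitive, so each colour class is a disjoint union of cliques of size at most $a-1$. In particular, for every vertex $v$ and every colour $p$, at most $a-2$ edges at $v$ have colour $p$.

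We grow a rainbow clique $R=\{x_1,\dots,x_k\}$ one vertex at a time. Suppose $R$ is rainbow with $k\le c-1$. We look for a vertex $y\notin R$ such that $R\cup\{y\}$ is still rainbow, and count the vertices $y$ that fail. There are three ways $y$ can fail.

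\begin{itemize}
\item[(i)] Some edge $yx_i$ has the same colour as an edge $x_jx_l$ of $R$. There are $\binom{k}{2}$ choices of the edge $x_jx_l$ and $k$ choices of $x_i$. Since colour classes at $x_i$ have size at most $a-2$, the number of such $y$ is at most $k\binom{k}{2}(a-2)$.
\item[(ii)] Two edges $yx_i$ and $yx_j$ have the same colour. By transitivity, $x_ix_j$ then has that same colour. So $y$ lies in the colour class of $x_ix_j$ at $x_i$, and therefore $yx_i$ coincides in colour with the edge $x_ix_j$ of $R$. This case is already counted in (i).
\item[(iii)] The edge $yx_i$ has the same colour as an edge $yx_j$. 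This is the same situation as (ii).
\end{itemize}

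Hence the number of excluded vertices is at most $|R|+k\binom{k}{2}(a-2)$. With $k\le c-1$, this is at most
\[
(c-1)+\tfrac{1}{2}(c-1)^2(c-2)(a-2).
\]

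It remains to check that this quantity is smaller than $\frac{1}{2}ac^3$. This holds because $(c-1)^2(c-2)(a-2)\le (c^3-c)(a-2)$ and $c-1\le c^3$. So whenever $n\ge\frac{1}{2}ac^3$, a valid extension vertex $y$ exists at every step $k<c$, and we reach a rainbow $K_c$.

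The main point to get right is that case (ii) is absorbed into (i) via transitivity; this is what stops the count from growing with $n$. I expect the final arithmetic check to be routine.
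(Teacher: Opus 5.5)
Your proof is correct and is essentially the paper's argument: the paper takes a largest rainbow clique $S$ with $|S|=k<c$ and shows, via transitivity turning $\chi(uv)=\chi(vw)$ into $\chi(uw)=\chi(uv)$, that every outside vertex sends an edge to $S$ in one of its $\binom{k}{2}$ colours, so $n-k\le k\binom{k}{2}(a-2)$, which is exactly your greedy count. Your case (iii) just repeats (ii), and the final arithmetic check should read $c-1<c^3+\tfrac12 ac-c$ rather than $c-1\le c^3$; neither point affects correctness.
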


\begin{proof}
    Let $n=\frac{1}{2}ac^3$ and suppose for the sake of contradiction that there is a colouring $\chi:E(K_n)\to\N$ without desired canonical cliques. Let $S\subseteq V(K_n)$ be a largest rainbow clique in $K_n$, so that $k:=|S|<c$. Now considering any vertex $v\in V(K_n)\setminus S$, since $S\cup \{v\}$ is not rainbow, either there is an edge $uv$ with $u\in S$ such that $\chi(uv)$ already appeared among colours of $S$, or there are $u,w\in S$ such that $\chi(uv)=\chi(vw)$. But transitivity then implies $\chi(uw)=\chi(uv)$, so the conclusion of the first case holds anyways. Since $S$ is rainbow, there are $\binom{k}{2}$ colours appearing in $S$, and each vertex in $S$ has at most $a-2$ incident edges coloured in any of these colours, so there are at most $k\binom{k}{2}(a-2)$ vertices in $V(K_n)\setminus S$. Therefore, $n-k\le k\binom{k}{2}(a-2)$, whence $n<\frac{1}{2}ak^3<\frac{1}{2}ac^3$, a contradiction.
\end{proof}

Quite interestingly, this proposition can be significantly improved when $a$ is large enough compared to $c$, to give the upper bound matching the lower one up to an additive term depending on $c$. This is exactly the content of Theorem \ref{thm-offerado-b3-upper}.

\begin{proof}[Proof of Theorem \ref{thm-offerado-b3-upper}]\sloppy
    Let $n=\offerado{a}{3}{c}-1$, and let $\chi:E(K_n)\to\N$ be a colouring of $K_n$ without the desired canonical cliques. As long as possible, repeat the following process: choose a maximal monochromatic clique of size at least $\frac{1}{2}c^3$ and remove it from the graph. We claim that the described process takes at most $c-2$ steps.

    We first show how the theorem follows from the claim. Let $G'$ be the remaining graph after the process has ceased. By definition there are no monochromatic cliques of size at least $\frac{1}{2}c^3$ in $G'$, and since $G$ (and thus $G'$) contains neither a lexicographic triangle nor a rainbow $K_c$, it follows from Proposition~\ref{prop-upper-b3-greedy} that $|G'|<\offerado{\frac{1}{2}c^3}{3}{c}\le O(c^6)$. Finally, since each of the chosen monochromatic cliques had size at most $a-1$, we conclude that $\offerado{a}{3}{c}=n+1\le (a-1)(c-2)+O(c^6)$, as claimed.

    Let us now prove the claim. Suppose, for the sake of contradiction, that this process continues for at least $c-1$ steps, giving monochromatic cliques $S_1,\dots,S_{c-1}$ in colours $p_1,\dots,p_{c-1}$, respectively. Note that for any pair of the chosen cliques $S_i$ and $S_j$, there is at most one vertex $v\in S_i$ that has an edge to $S_j$ coloured in $p_j$, for otherwise by the transitivity of colour $p_j$ whole union $S_i\cup S_j$ would be a monochromatic clique in colour $p_j$, contradicting the maximality of $S_i$ or $S_j$. Therefore, there are at most $c-2$ vertices in $S_i$ that have an edge to some other $S_j$ coloured in $p_j$, for all $i=1,\dots,c-1$. Now remove those vertices from the graph for every $S_i$, leaving monochromatic cliques $S_1',\dots,S_{c-1}'$ of size at least $\frac{1}{2}c^3-c$ and no edge $v_iv_j$, $v_i\in S_i'$, $v_j\in S_j'$, coloured in $p_i$ or $p_j$.

    Next, a key observation is that in the remaining graph, for every pair of cliques $S_i'$ and $S_j'$, all colours form a matching between vertices of $S_i'$ and $S_j'$. To see this, note that if there is a colour $p$ that appears on two edges between $S_i'$ and $S_j'$ sharing a vertex in $S_i'$, then by the transitivity of colour $p$ an edge in $S_j'$ would need to have this colour, giving $p=p_j$, a contradiction since no edge between $S_i'$ and $S_j'$ can be coloured in $p_j$ by the choice of $S_i'$'s.
    
    Now let $\ell$ be the maximum integer such that there are $\ell+1$ vertices $v_0,v_1\in S_1',v_2\in S_2',\dots,v_{\ell}\in S_{\ell}'$ that form a rainbow clique (note that we choose two vertices from the first clique). By the definition of $\chi$, we have $1\le \ell\le c-2$, and for every vertex $v\in S_{\ell+1}'$, there is an edge $v_iv$, $0\le i\le \ell$, whose colour appears among $v_0,v_1,\dots,v_{\ell}$. On the other hand, for every $v_i$, and for every colour $p$ appearing among $v_0,v_1,\dots,v_{\ell}$, there is at most one vertex in $S_{\ell+1}'$ that has an edge to $v_i$ coloured in $p$, since $p$ forms a matching between $S_i'$ and $S_{\ell+1}'$ (or $S_1'$ and $S_{\ell+1}'$ for $v_0$) by the previous observation. As there are exactly $\binom{\ell+1}{2}$ colours appearing among $v_0,v_1,\dots,v_{\ell}$, there can be at most $(\ell+1)\binom{\ell+1}{2}<\frac{1}{2}c^3-c$ vertices in $S_{\ell+1}'$, a contradiction completing the proof.
\end{proof}

Even when $a=O(c^5)$, the general upper bound of Proposition~\ref{prop-upper-b3-greedy} is not the best we can get. In \cite{araujo2024off}, it was already remarked that one can use the results of Alon, Jiang, Miller, and Pritikin \cite{alon2003properly} to deduce a slightly stronger general upper bound. Recall that for any positive integer $m$ and graph $G$, we denote by $g(m,G)$ the smallest integer $n$ such that any $m$-good colouring of $K_n$ contains a rainbow copy of $G$, where a colouring is $m$-good if every colour appears at most $m$ times at each vertex. It was shown in \cite{alon2003properly} that $g(m,K_t)\le O(\frac{mt^3}{\ln t})$. Since any colouring lacking both monochromatic $K_a$ and lexicographic triangles is $(a-2)$-good, we have $\offerado{a}{3}{c}\le g(a-2,K_c)\le O(ac^3/\ln c)$. 

\begin{proposition}[\cite{araujo2024off}]
    For all positive integers $a,c\ge 3$, we have $\offerado{a}{3}{c}\le O(ac^3/\ln c)$.
\end{proposition}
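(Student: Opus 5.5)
The bound follows by reducing to the Alon--Jiang--Miller--Pritikin bound on rainbow cliques in $m$-good colourings. Fix a colouring $\chi:E(K_n)\to\N$ with no monochromatic $K_a$ and no lexicographic triangle; we need a rainbow $K_c$ once $n\ge C\,ac^3/\ln c$ for an absolute constant $C$.

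First, $\chi$ is $(a-2)$-good. As observed before Proposition~\ref{prop-upper-b3-greedy}, the absence of lexicographic triangles makes every colour transitive. Indeed, if $uv$ and $vw$ share a colour $p$ but $uw$ has a different colour, then $\{u,v,w\}$ is lexicographic with $v$ first. Hence each colour class is a disjoint union of cliques. Every clique in a colour class is a monochromatic clique, so it has at most $a-1$ vertices. Consequently each vertex $v$ has at most $a-2$ incident edges in any single colour, namely the edges to the other vertices of $v$'s clique in that colour.

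Second, we apply the theorem of Alon, Jiang, Miller and Pritikin \cite{alon2003properly}: every $m$-good colouring of $K_n$ with $n\ge g(m,K_t)=O(mt^3/\ln t)$ contains a rainbow $K_t$. Taking $m=a-2$ and $t=c$ shows that $\chi$ contains a rainbow $K_c$ once $n\ge g(a-2,K_c)$. Therefore
\[
\offerado{a}{3}{c}\le g(a-2,K_c)\le O\bigl((a-2)c^3/\ln c\bigr)\le O(ac^3/\ln c).
\]

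The only real content lies in the cited bound $g(m,K_t)=O(mt^3/\ln t)$. If it had to be reproved, the natural route is a semi-random nibble, or equivalently a Lovász Local Lemma, applied to a random subset of size about $t$. With no $\delta$-popular colour for $\delta\approx m/n$, one would bound the expected number of monochromatic pairs of adjacent edges and of disjoint edges, as in the proof of Proposition~\ref{prop-offerado-general}. The main obstacle is gaining the $\ln t$ factor over the naive union bound, which gives only $O(mt^3)$ and is essentially Proposition~\ref{prop-upper-b3-greedy}. I would try a deletion argument combined with Ajtai--Komlós--Szemerédi type independent-set bounds for the linear $3$-uniform hypergraph of adjacent monochromatic pairs, which is where the logarithmic saving comes from. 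Since the paper attributes the proposition to \cite{araujo2024off} via \cite{alon2003properly}, I would simply cite that result.
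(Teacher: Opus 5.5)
Your proof is correct and follows the paper's argument: with no lexicographic triangle every colour is transitive, so colour classes are disjoint unions of cliques with at most $a-1$ vertices, the colouring is $(a-2)$-good, and the Alon--Jiang--Miller--Pritikin bound then gives $\offerado{a}{3}{c}\le g(a-2,K_c)=O(ac^3/\ln c)$. Your closing remarks on reproving that bound are not needed, and are slightly off: in a union-bound or deletion argument the binding obstruction is the $4$-uniform hypergraph of disjoint monochromatic edge pairs (it alone already forces $n$ of order $mt^3$), so the $\ln t$ saving has to be won there, not on the adjacent-pairs hypergraph, which in any case is not linear in general.
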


Although we are not able to match this general upper bound for $a=O(c^5)$, we can still improve the corresponding lower bound for $a$ small in terms of $c$. 

\begin{proposition}\label{prop-b3-small-a-finspace}
    Let $p\ge3$ be a prime and $d\ge 2$ an integer. Then $\offerado{p+1}{3}{\lceil2p^{(d-1)/2}\rceil}> p^d$.
\end{proposition}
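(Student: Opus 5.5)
We colour $K_n$ on the vertex set $\mathbb{F}_p^d$, so $n=p^d$, by letting the colour of $uv$ be the line through $u$ and $v$. That is, $\chi(uv)=\{u+\lambda(v-u):\lambda\in\mathbb{F}_p\}$. The three requirements are checked one at a time.

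First, the colouring has no lexicographic triangle. Suppose $uv$ and $vw$ have the same colour. Then $u,v,w$ lie on a common line, so $uw$ also gets that colour. Hence every colour class is transitive, and no triangle uses exactly two colours. A lexicographic triangle needs exactly two colours (two edges from its first vertex share a colour, and the last edge differs), so none exists.

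Second, the colouring has no monochromatic $K_{p+1}$. A monochromatic clique in the colour of a line $L$ consists of points all of whose connecting lines equal $L$. So every such clique lies inside $L$, which has exactly $p$ points.

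Third, and this is the main step, the colouring has no rainbow $K_c$ with $c=\lceil 2p^{(d-1)/2}\rceil$. A set $S$ is rainbow exactly when no three of its points are collinear, since distinct pairs on a common line would share a colour. So $S$ is a cap set in the strong, no-three-collinear sense in $\mathbb{F}_p^d$, and we must show $|S|<2p^{(d-1)/2}$. We use a counting argument. Fix a nonzero direction $x$. The translates $s+\langle x\rangle$ partition $\mathbb{F}_p^d$ into $p^{d-1}$ parallel lines, and each meets $S$ in at most $2$ points. This alone gives only $|S|\le 2p^{d-1}$, which is too weak. We improve it by double counting the difference vectors: every ordered pair $(s,s')$ of distinct points of $S$ gives a direction $s'-s$. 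Collinearity is a stronger constraint, though. If $s,s'$ and $t,t'$ have parallel differences, the lines are merely parallel, and parallel is not a contradiction, so directions alone do not suffice. Instead we count differences exactly. Suppose $s'-s=t'-t$ with $\{s,s'\}\ne\{t,t'\}$. Then $s,s',t,t'$ form a parallelogram, which is allowed. So a Sidon-type argument is also unavailable, and the cleaner route is a projection. Choose a hyperplane $H$ (so $|H|=p^{d-1}$) and project along a direction $x\notin H$. Each fibre holds at most $2$ points of $S$, so $|S|\le 2p^{d-1}$. This matches the target only when $d=1$, so the bound must be improved by iterating.

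The iteration runs as follows. Consider a 2-dimensional affine plane $P$. A no-three-collinear set inside $P\cong\mathbb{F}_p^2$ has at most $p+2$ points; for odd $p$ this drops to $p+1$, the oval bound. Now suppose $d=2k+1$ is odd. We write $\mathbb{F}_p^d$ as a union of $p^{d-2}$ parallel planes and then refine the count. The cleaner approach is to count incidences between $S$ and all lines. There are $\binom{|S|}{2}$ pairs, and each determines a distinct line. Each line meets $S$ in at most $2$ points. Through a fixed point $s\in S$, the $|S|-1$ lines joining $s$ to the rest of $S$ are pairwise distinct. I expect this counting to be the main obstacle, since the trivial counts reach only $p^{d-1}$. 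If no clean argument appears, I would fall back on a direction-collision argument. There are $(p^d-1)/(p-1)$ directions. If $|S|(|S|-1)/2>$ the number of directions, two pairs share a direction; with a more careful refinement (pairs sharing a direction and a common point force collinearity), this gives $|S|^2/2\lesssim p^{d-1}$, hence $|S|<2p^{(d-1)/2}$. So the concrete thing to verify is that the pairs through a common point of $S$ all have distinct directions, and that pairs from disjoint points can be handled by a factor-$2$ loss, matching the constant $2$ in the statement.
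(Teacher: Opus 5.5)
Your colouring is the wrong one, and with it your third step is false, not merely hard. If $uv$ is coloured by the \emph{line} through $u$ and $v$, a set is rainbow exactly when no three of its points are collinear. Such sets can be much larger than $2p^{(d-1)/2}$, and this already happens for $d=2$.
\begin{itemize}
\item For $p\ge 5$, the parabola $\{(x,x^2):x\in\mathbb{F}_p\}$ has $p\ge\lceil 2\sqrt p\rceil$ points and no three collinear. The oval bound $p+1$ you quote is actually attained, and it exceeds $2\sqrt p$.
\item For $p=3$, the set $\{0,1\}^2\subseteq\mathbb{F}_3^2$ spans six distinct lines. This is a rainbow $K_4=K_{\lceil 2\sqrt3\rceil}$.
\end{itemize}
So your colouring contains a rainbow $K_c$ and does not witness the bound. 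Your fallback direction-collision count cannot rescue it. Two pairs with the same direction on disjoint points span parallel but distinct lines, which get different colours under your rule. This is exactly the obstruction you identified yourself, and no ``factor-$2$ loss'' handles it.

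The missing idea is to colour by direction (parallel class) instead: let $\chi(uv)$ be the one-dimensional subspace $\langle v-u\rangle$. Your first two steps go through unchanged.
\begin{itemize}
\item If $uv\parallel vw$, the shared vertex $v$ forces $u,v,w$ to be collinear, so $uw$ has the same direction. Hence colour classes are transitive and there is no lexicographic triangle.
\item In a monochromatic clique, all differences from a fixed vertex point in one direction. So the clique lies on a line and has at most $p$ points.
\end{itemize}
The third step then becomes plain pigeonhole. A rainbow $K_c$ needs $\binom{c}{2}$ pairwise distinct directions, but there are only $(p^d-1)/(p-1)$ directions. Since $c\ge 2p^{(d-1)/2}$,
\[
\binom{c}{2}\ \ge\ 2p^{d-1}-p^{(d-1)/2}\ >\ \frac{p^d-1}{p-1}.
\]
The strict inequality holds for $p\ge 3$ and $d\ge 2$: using $\frac{p^{d-1}-1}{p-1}\le\frac{p^{d-1}-1}{2}$, it reduces to $\bigl(p^{(d-1)/2}-1\bigr)^2>0$. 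This is exactly the paper's argument.
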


\begin{proof}
    Consider the complete graph $G$ with vertices corresponding to points in $\mathbb{F}_p^d$. Then each edge of this graph corresponds to a non-zero vector in $\mathbb{F}_p^d$, and we can colour the edge with the colour corresponding uniquely to its direction. In other terms, two edges share the colour if and only if they are parallel. Note that since being parallel is an equivalence relation, there are no lexicographic triangles in this colouring. Moreover, monochromatic cliques in this colouring simply correspond to collinear vertices in $\mathbb{F}_p^d$, so the largest monochromatic clique has size at most $p$. Finally, the number of directions (i.e. equivalence classes of non-zero vectors under the parallelism relation) in $\mathbb{F}_p^d$ is $\frac{p^d-1}{p-1}$, so the largest rainbow clique can only have as many edges. On the other hand, a rainbow $K_{2p^{(d-1)/2}}$ would require at least $\binom{\lceil2p^{(d-1)/2}\rceil}{2}$ colours, which is clearly larger, a contradiction.
\end{proof}

By taking $a=p+1$ and $c=2p^{(d-1)/2}$ for a large prime $p$ and some $d$, the last proposition implies that $\offerado{a}{3}{c}\ge \Omega(ac^2)$. In particular, for $d=2$ one gets $a=\Theta(c^2)$, and this result can be extended to $a=O(c^2)$ by subsampling, as shown by the following theorem.

\begin{theorem}\label{thm-b3-small-a}
    There exist absolute constants $\gamma_1,\gamma_2>0$ such that for all integers $a,c\ge 3$ satisfying $\gamma_1\ln c\le a\le \gamma_2 c^2$, it holds that $\offerado a3c \ge \Omega(ac^2)$.
\end{theorem}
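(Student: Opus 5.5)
We start from the construction of Proposition~\ref{prop-b3-small-a-finspace} with $d=2$ and subsample it. Given $a,c$ in the stated range, choose a prime $p$ with $p=\Theta(c)$ (by Bertrand's postulate), say $c/8\le p\le c/4$. Colour $K$ on $\mathbb{F}_p^2$ by directions, so two edges share a colour exactly when they are parallel. This colouring uses $p+1$ colours and has no lexicographic triangles, since parallelism is an equivalence relation. Monochromatic cliques are collinear sets, i.e.\ subsets of lines. We then take a random subset $X\subseteq\mathbb{F}_p^2$, including each point independently with probability $q=\Theta(a/p)$; the condition $a\le\gamma_2c^2$ keeps $q\le 1$. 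The induced colouring on $X$ still has no lexicographic triangle. Its rainbow cliques have at most $p+1$ edges, so they have fewer than $2\sqrt{p}+2\le c$ vertices once $p\le c^2/8$; this holds easily. It therefore remains to control monochromatic cliques and the size of $X$.

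The main step is to bound the intersection of $X$ with every line. There are $p(p+1)$ lines, each with $p$ points, and $|\ell\cap X|\sim\mathrm{Bin}(p,q)$ has mean $pq$. We choose $q$ so that $pq=a/(2e^2)$, say. A Chernoff bound gives
\[
\prob\bigl(|\ell\cap X|\ge a\bigr)\le\Bigl(\frac{e\,pq}{a}\Bigr)^{a}\le e^{-a}.
\]
By the union bound over at most $2p^2\le c^2$ lines, the failure probability is at most $c^2e^{-a}$. This is below $1/2$ once $a\ge\gamma_1\ln c$ with $\gamma_1$ a suitable constant; this is precisely where the lower restriction on $a$ enters. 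Meanwhile $|X|\sim\mathrm{Bin}(p^2,q)$ has mean $p^2q=\Theta(ap)=\Theta(ac)$, and it is at least half its mean with probability greater than $1/2$.

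Hence some realisation of $X$ has every line meeting it in fewer than $a$ points and satisfies $|X|=\Omega(ac)$. Its colouring has no monochromatic $K_a$, no lexicographic $K_3$ and no rainbow $K_c$, which gives $\offerado a3c>|X|$.

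There is a problem, however: this yields only $\Omega(ac)$, not $\Omega(ac^2)$. To gain the extra factor of $c$, we instead work in $\mathbb{F}_p^d$ with rainbow threshold $c\approx2p^{(d-1)/2}$ and sample points with density $q$ so that lines meet $X$ in about $pq\approx a$ points. The sample then has size $p^dq\approx ap^{d-1}\approx ac^2$, as desired.

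We therefore choose $d=3$ and $p\approx c^2/4$; since $a\le\gamma_2c^2$, we have $q=\Theta(a/p)\le1$. The rainbow bound is inherited unchanged: a rainbow clique uses at most $p^2+p+1$ colours, so it has fewer than $c$ vertices. The number of lines is $O(p^4)=O(c^8)$, so the union bound requires $a\ge\gamma_1\ln c$ with a larger constant, which is allowed. The expected size is $p^3q=\Theta(ap^2)=\Theta(ac^4)$, which is even larger than needed.

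The main obstacle is to make sure the constants line up. We need $c\ge 2\sqrt{p^2+p+1}+1$, which holds with $p\approx c/2$ rather than $c^2/4$: the number of directions in $\mathbb{F}_p^3$ is about $p^2$, so rainbow cliques have about $\sqrt2\,p$ vertices. With $p=\Theta(c)$ and $d=3$, the size is $p^3q=\Theta(ap^2)=\Theta(ac^2)$, and $q\le1$ requires $a\lesssim p=\Theta(c)$. For the full range $a\le\gamma_2c^2$, we would instead use $d=2$ with $p$ chosen so that $p\cdot p$-structure gives $ac^2$, or interpolate: pick $d$ and $p$ with $2p^{(d-1)/2}\le c$ and $a\lesssim p$. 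Then $|X|\approx ap^{d-1}\approx ac^2/4$. When $c\lesssim a\le\gamma_2c^2$, we take $d=2$ with $p\approx c^2/4\ge a$, which gives $ap=\Theta(ac^2)$. Indeed the $d=2$ computation above should have used $p=\Theta(c^2)$ rather than $p=\Theta(c)$. When $a\lesssim c$, we take $d=3$ with $p\approx c/2$. In both cases the random-subsampling and Chernoff-plus-union-bound argument above completes the proof.
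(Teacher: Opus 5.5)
Once you settle on $d=2$ with a prime $p=\Theta(c^2)$ (e.g.\ $p\in[c^2/8,c^2/4]$), your argument is essentially the paper's proof, differing only in the constant in $q$: direction-colouring of $\mathbb{F}_p^2$, independent subsampling at rate $q=\Theta(a/p)$, a Chernoff bound on each line with a union bound over the $O(c^4)$ lines (the only place $a\ge\gamma_1\ln c$ is used), and concentration of $|X|$ around $p^2q=\Theta(ac^2)$. This single choice already covers the whole range $\gamma_1\ln c\le a\le\gamma_2c^2$, so your $d=3$ branch is unnecessary, and the abandoned intermediate claims should be deleted from a final write-up: with $p=\Theta(c)$, $a\le\gamma_2c^2$ does not give $q\le 1$, and for $d=3$ with $p\approx c^2/4$ the rainbow bound is not ``inherited''.
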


\begin{proof}
    Set $d=2$ and by Bertrand's postulate choose a prime $p$ in the interval $[c^2/8, c^2/4]$. If $\gamma_2\leq 1/8$, we have $3\le a\le c^2/8\le p$.

    As in the proof of the previous proposition, consider the colouring of points of the finite affine plane $\mathbb{F}_p^2$ induced by line directions. Again, each line has exactly $p$ points, there are no lexicographic triangles, and each rainbow clique has size smaller than $2\sqrt{p}\le c$. Now take $q=\gamma a/p$ with $\gamma=1-\sqrt{\frac{9}{a}\ln p}$ and keep each point of $\mathbb{F}_p^2$ with probability $q$ and independently of each other, with preserved colouring of edges. Note that $\frac{9\ln p}{a}\le O(\ln(c/2)/a)\le\frac{1}{4}$ for appropriate choice of $\gamma_1>0$, so the random subsampling is well-defined.
    
    In the remaining graph there are still no lexicographic triangles or rainbow cliques of size $c$ or larger, so let us consider monochromatic cliques (i.e. lines). For given line $\ell$, let $X_{\ell}$ be a random variable counting the number of points preserved on $\ell$. Then for each line $\ell$, we have $\expect X_\ell=pq=\gamma a$ and by Chernoff bounds 
    $$\prob\left[ X_\ell\ge a \right]\le e^{-(1-\gamma)^2a^2/3a}=p^{-3}.$$
    Since in the affine plane $\mathbb{F}_p^2$ there are $p\cdot\frac{p^2-1}{p-1}<p^{3}/2$ lines, by union bound the probability that in the random subset any line has at least $a$ points is smaller than $1/2$. Similarly, the expected total number of points in the random subset is $p^2q$ and by Chernoff bound the probability that there is less than $p^2q-\sqrt{3p^2q}$ of them is at most $e^{-1}<1/2$. In particular, with positive probability in a random subset of $\mathbb{F}_p^2$ every line has less than $a$ points and there is at least $p^2q-\sqrt{3p^2q}=\Omega(ac^2)$ points in total. This finishes the proof.
\end{proof}

Even though this theorem is limited to $a=\Omega(\ln c)$, we can still extend the bound to all $a$. Indeed, note that the results of Babai \cite{babai1985anti} show that $\offerado{3}{3}{c}\ge \Omega(c^3/\ln c)$ for all $c\ge 3$, so that
$$\offerado{a}{3}{c}\ge \offerado{3}{3}{c}\ge \Omega(c^3/\ln c)\ge \Omega(ac^2)$$
for all $a=O(\ln c)$.  Note that we are still not able to go beyond the $a=\Theta(c^2)$ threshold given by Proposition \ref{prop-b3-small-a-finspace}. That is, for the order of $a$ between $c^2$ and $c^5$, the best we have are the general bounds $\Omega(ac)\le \offerado{a}{3}{c}\le O(ac^3/\ln c)$.

\section{Tripartite canonical Ramsey numbers}\label{sec-tri-erado}

In what follows, we denote by $V_1(H),\dots,V_k(H)$ the vertex parts of a $k$-uniform $k$-partite hypergraph $H$, or simply $V_1,\dots,V_k$ if $H$ is clear from context. Moreover, for $J\subseteq[k]$, we denote by $V_J(H)$ or $V_J$ the set of $|J|$-tuples of vertices from the corresponding parts, i.e. the direct product $\prod_{j\in J} V_j$.

\begin{definition}\label{def-bounded}
    Given integers $k,n$ and a colouring of a $k$-uniform $k$-partite hypergraph $H$ with vertex parts of size $n$, a vertex tuple $\mathbf{v}\in V_J$, $J\subsetneq [k]$, is said to admit a $\delta$-popular colour for $0\le \delta\le 1$ if at least $\delta n^{k-|J|}$ edges in $H$ containing $\mathbf{v}$ are coloured in the same colour.
    
    A colouring of $H$ is $(\delta,J)$-bounded for real $\delta$, $0\le \delta\le 1$ and $J\subsetneq[k]$ if at most $\delta n^{|J|}$ tuples in $V_J$ admit a $\delta$-popular colour.
    
    Similarly, a colouring is $(\delta,j)$-bounded for integer $0\le j\le k-1$ if it is $(\delta,J)$-bounded for all $J\in\binom{[k]}{j}$. Finally, we say a colouring is $(\delta_0,\dots,\delta_{k-1})$-bounded if it is $(\delta_j,j)$-bounded for $j=0,1,\dots,k-1$.
\end{definition}

Note that for all $1\le j\le k-1$ and $0<\delta\le 1/2$, $(\delta,j)$-boundedness implies $(\sqrt{2\delta},j-1)$-boundedness: indeed, take any colouring $\chi:E\to \N$ and $J\in\binom{[k]}{j-1}$, and suppose for the sake of contradiction that at least $\sqrt{2\delta}|V_J|$ $(j-1)$-tuples $S$ in $V_J$ have a $\sqrt{2\delta}$-popular colour $c_S$. We call these tuples bad. Now fix arbitrary $i\in[k]\setminus J$, and for $S\in V_J$ denote by $x_S$ the number of vertices $v\in V_i$ such that $M_c(S\cup\{v\}):=|\{e\in E|e\supseteq S\cup\{v\},\chi(e)=c\}|\ge \delta|V_{[k]\setminus(J\cup\{i\})}|$ for some colour $c$, i.e. $S\cup\{v\}$ has a $\delta$-popular colour. Then provided that $S$ is a bad tuple,
\begin{align*}
    \sqrt{2\delta}|V_{[k]\setminus J}|&\le M_{c_S}(S)&\\&=\sum_{v\in V_i} M_{c_S}(S\cup\{v\})\\&\le x_S|V_{[k]\setminus(J\cup\{i\})}|+(|V_i|-x_S)\delta|V_{[k]\setminus(J\cup\{i\})}|\\&=|V_{[k]\setminus(J\cup\{i\})}|\big(\delta|V_i|+(1-\delta)x_S\big),
\end{align*}
whence $\sqrt{2\delta}|V_i|\le \delta|V_i|+(1-\delta)x_S$ and so $x_S\ge \frac{\sqrt{\delta}\left(\sqrt2-\sqrt{\delta}\right)}{1-\delta}|V_i|$. Now the number of $j$-tuples in $V_{J\cup\{i\}}$ with a $\delta$-popular colour is at least
$$\sum_{S\in V_J}x_S\ge\sum_{\textrm{bad }S\in V_J}x_S\ge \sqrt{2\delta}|V_J|\cdot\frac{\sqrt{\delta}\left(\sqrt{2}-\sqrt{\delta}\right)}{1-\delta}|V_i|>\delta|V_{J\cup\{i\}}|,$$
where the last inequality follows from $\sqrt{2}-1>\sqrt{\delta}-\delta$, which holds for all $0\le\delta\le1/2$. But this contradicts the fact that $\chi$ was $(\delta,j)$-bounded, so $\chi$ must be $(\sqrt{2\delta},j-1)$-bounded as well. This implies that often it will be enough to consider $(\delta,k-1)$-boundedness for appropriate value of $\delta$ and thus we will simply say that the colouring is $\delta$-bounded.

The following proposition is an extension of the K\H{o}v\'ari-S\'os-Tur\'an theorem \cite{kHovari1954problem} to hypergraphs due to Erd\H{o}s \cite{erdos1964extremal}.

\begin{proposition}\label{prop-kst}
    For integer $k\ge1$, let $H$ be a $k$-uniform $k$-partite hypergraph of density $d=\frac{|E|}{|V_1|\cdots|V_k|}$. If positive integers $t_1,\dots,t_k$ are such that
    $$\left(\frac{d}{4^{k-1}}\right)^{\prod_{j<i}t_j}|V_i|\ge2t_i$$
    for every $i\in[k]$, then there is a copy of $\cmpltt{k}{t_1}{t_k}$ in $H$.
\end{proposition}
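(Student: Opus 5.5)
We would prove Proposition~\ref{prop-kst} by induction on $k$, using the standard double-counting (Jensen) argument of K\H{o}v\'ari--S\'os--Tur\'an on the last part. For $k=1$, $H$ is a set of $d|V_1|$ vertices of $V_1$, and the hypothesis $d|V_1|\ge 2t_1$ gives a copy of $K^{(1)}_{t_1}$ directly. For the inductive step, we would single out the part $V_k$ and, for each $v\in V_k$, consider its link $H_v$: the $(k-1)$-uniform $(k-1)$-partite hypergraph on $V_1,\dots,V_{k-1}$ consisting of all $(k-1)$-tuples $\mathbf{x}$ with $\mathbf{x}\cup\{v\}\in E$. Averaging over $v$, the links have average density $d$.

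The key step is to find a copy $K$ of $\cmpltt{k-1}{t_1}{t_{k-1}}$ (on fixed vertex sets $T_i\subseteq V_i$) which lies in the links of at least $t_k$ vertices $v\in V_k$; these $v$ then complete $K$ to a copy of $\cmpltt{k}{t_1}{t_k}$. Write $m=\prod_{j<k}t_j$. We would count pairs $(v,\mathcal{T})$ where $\mathcal{T}=(T_1,\dots,T_{k-1})$ with $|T_i|=t_i$ and $T_1\times\dots\times T_{k-1}\subseteq H_v$. Rather than counting copies in $H_v$ directly, which is hard, we apply the induction hypothesis in a counting (supersaturated) form: the number of copies of $\cmpltt{k-1}{t_1}{t_{k-1}}$ in a $(k-1)$-partite hypergraph of density $d'$ is at least roughly $(d'/4^{k-2})^{m}\prod_{i<k}\binom{|V_i|}{t_i}$, up to lower-order terms controlled by conditions like those in the statement. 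This form follows from the same induction if we carry it along, and is cleanest if we strengthen the inductive statement to this counting version from the outset. Summing over $v$ and using convexity of $x\mapsto x^{m}$ (Jensen) gives at least about $|V_k|(d/4^{k-2})^{m}\prod_{i<k}\binom{|V_i|}{t_i}$ pairs. If every $\mathcal{T}$ were contained in fewer than $t_k$ links, the count would be below $t_k\prod_{i<k}\binom{|V_i|}{t_i}$. The hypothesis $(d/4^{k-1})^{m}|V_k|\ge 2t_k$ gives a contradiction, with the slack between $4^{k-2}$ and $4^{k-1}$ and the factor $2$ absorbing the losses.

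The main obstacle is the losses in the counting version. We must handle links of small density, where the convex-function lower bound degenerates, and account for degenerate or low-order subtractions when converting ``$\ge 2t_i$'' into ``a $(d/4)$-fraction of all $t_i$-subsets.'' We would first try discarding vertices $v$ whose link has density below $d/2$; these carry at most half the edges, so the remaining ones still carry density at least $d/2$ on average. Each level of the induction would lose a factor of at most $4$ in the density, which matches the $4^{k-1}$ in the statement. Verifying that the lower-order terms are absorbed by the factor $2$ under the given hypotheses for every $i$ is routine but delicate bookkeeping.
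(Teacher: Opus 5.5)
The paper gives no proof of this proposition; it is quoted from Erd\H{o}s. The shape of the hypothesis does, however, reflect the standard common-link induction, which is a different route from yours.

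In that argument one regards $H$ as a bipartite graph between $V_1$ and $V_2\times\dots\times V_k$. Convexity of $x\mapsto\binom{x}{t_1}$, together with $d|V_1|\ge 2t_1$, yields a $t_1$-set $T_1\subseteq V_1$ whose common link in $V_2\times\dots\times V_k$ has density at least $(d/2)^{t_1}$. One then applies the existence statement for $k-1$ to that link. The hypotheses transfer because $(d/2)^{t_1}/4^{k-2}\ge (d/4^{k-1})^{t_1}$, and iterating this step is exactly what produces the exponents $\prod_{j<i}t_j$. That argument needs only existence in the induction and never meets sparse links. You peel off $V_k$ instead, so you must strengthen the induction to a supersaturated (counting) statement. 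This works and gives a counting result as a by-product, at the cost of the extra bookkeeping you anticipated.

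That bookkeeping needs one correction. The bound $(d'/4^{k-2})^m\prod_{i<k}\binom{|V_i|}{t_i}$ is false already for $k-1=1$, since $\binom{d'n}{t}<d'^t\binom{n}{t}$ whenever $t\ge 2$ and $d'<1$. A correct inductive statement is: under the hypotheses of the proposition, $H$ contains at least $(d/(2\cdot 4^{k-1}))^{t_1\cdots t_k}\prod_{i\le k}\binom{|V_i|}{t_i}$ copies. With it your plan closes exactly, as follows.

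Links with $d_v\ge d/2$ satisfy the $(k-1)$-hypotheses because $d_v/4^{k-2}\ge d/4^{k-1}$. In place of Jensen over all $v$, use $\sum_{d_v\ge d/2}d_v^m\ge (d/2)^{m-1}\sum_{d_v\ge d/2}d_v\ge |V_k|(d/2)^m$. Let $N(\mathcal{T})$ denote the number of links containing $\mathcal{T}$. This gives $\sum_{\mathcal{T}}N(\mathcal{T})\ge |V_k|(d/4^{k-1})^m\prod_{i<k}\binom{|V_i|}{t_i}$, which is precisely the quantity the hypothesis for $i=k$ controls. Hence some $\mathcal{T}$ lies in at least $t_k$ links.

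The factor $2$ in $2t_k$ is what carries the counting statement itself up to level $k$. This uses $\binom{\bar N}{t_k}\ge\binom{|V_k|}{t_k}\bigl(\bar N/(2|V_k|)\bigr)^{t_k}$ for $\bar N\ge 2t_k$, together with $\tfrac12\ge 2^{-m}$.
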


Before we proceed, let us note that for uniformity $k=1$, the corresponding Erd\H{o}s-Rado number $\erado{1}{t}$ is simply the smallest size of a set such that any colouring of this set contains either $t$ elements of the same colour or $t$ elements of pairwise distinct colours. Thus the following proposition readily follows from the pigeonhole principle.

\begin{proposition}\label{prop-unif1}
    For all $t\ge 1$, $\erado{1}{t}=(t-1)^2+1$.
\end{proposition}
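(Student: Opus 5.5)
We need to prove $\erado{1}{t}=(t-1)^2+1$: a set of $n$ elements, coloured arbitrarily, contains $t$ elements of one colour or $t$ elements of pairwise distinct colours. For $k=1$, $\cmplt{1}{t}$ is just a set of $t$ vertices, each being an edge. A colouring is canonical with $J=\emptyset$ when all $t$ edges get the same colour. It is canonical with $J=\{1\}$ when edges have the same colour iff they are equal, i.e. rainbow. So we prove matching upper and lower bounds.

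\emph{Upper bound.} Take $n=(t-1)^2+1$ elements coloured arbitrarily, and let the colour classes be $C_1,\dots,C_m$.
\begin{itemize}
\item If $m\ge t$, pick one element from each of $t$ distinct classes; this is a rainbow $t$-set.
\item Otherwise $m\le t-1$. By pigeonhole some class has size at least $\lceil n/m\rceil\ge \lceil((t-1)^2+1)/(t-1)\rceil=t$, giving a monochromatic $t$-set.
\end{itemize}

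\emph{Lower bound.} Colour $(t-1)^2$ elements using $t-1$ colours, each used exactly $t-1$ times.
\begin{itemize}
\item Any monochromatic set lies in one class, so it has size at most $t-1$.
\item Any rainbow set takes at most one element per class, so it has size at most $t-1$.
\end{itemize}
Hence $\erado{1}{t}>(t-1)^2$.

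The degenerate case $t=1$ is trivial, since $\erado{1}{1}=1$. No step is a genuine obstacle. The only point needing care is checking that, for $k=1$, Definition \ref{def-multierado} reduces exactly to ``monochromatic or rainbow''.
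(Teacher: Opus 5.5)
Your proof is correct and takes essentially the paper's approach. The paper only remarks that the result ``readily follows from the pigeonhole principle,'' and your argument spells this out: the pigeonhole split on the number of colour classes gives the upper bound, and the balanced colouring with $t-1$ classes of size $t-1$ gives the matching lower bound.
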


We are now able to give a good upper bound on the canonical Ramsey number of bipartite graphs in the density case.

\begin{theorem}\label{thm-bip-turan-erado}
    There exists an absolute constant $\gamma>0$ such that for any integer $t$ large enough, any colouring of a bipartite graph $G$ of density $0<d\le 1/(2^{10}t)$ and vertex parts of size $n\ge(t/d)^{\gamma t}$ contains a canonical copy of $\bicmplt{t}$.
\end{theorem}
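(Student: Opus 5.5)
The plan is a density-increment dichotomy on popular colours, measured relative to degrees. We keep the exponent of $n$ at $O(t\log(t/d))$ by arranging that Proposition~\ref{prop-kst} is only ever applied with a part of size $O(t)$ in the ``exponent'' position. First we clean $G$: by repeatedly deleting vertices of degree less than $dn/4$, we pass to a subgraph with parts of size $\Omega(dn)$ in which every vertex has degree $\Theta(dn)$. Renaming, we may assume all degrees are between $dn/4$ and $n$. Set $\epsilon=(d/t)^{4t}$. We call a vertex \emph{heavy} if some colour appears on at least $\epsilon\deg(v)$ of its edges, and \emph{light} otherwise.

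\textbf{Case 1: at least half of $V_1$ is heavy (symmetrically for $V_2$).} For each heavy $x\in V_1$, keep only the edges of its most popular colour $c(x)$. This gives a bipartite graph $H$ of density at least $\epsilon d/8$ in which every left vertex is monochromatic. We apply Proposition~\ref{prop-kst} with $k=2$, putting a part of size $t$ on the $V_2$ side (first coordinate, exponent $1$) and a part of size $T=(t-1)^2+1$ on the $V_1$ side (exponent $t$). This requires only $(\epsilon d/16)^{t}n\ge 2t^2$, which holds for $n\ge(t/d)^{\gamma t}$. We obtain $T$ left vertices and $t$ right vertices spanning a $K_{T,t}$ in which every edge at $x$ has colour $c(x)$. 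By Proposition~\ref{prop-unif1}, among the $T$ left vertices there are either $t$ with the same $c(x)$, giving a monochromatic $\bicmplt t$, or $t$ with pairwise distinct $c(x)$, giving a $\{1\}$-canonical $\bicmplt t$.

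\textbf{Case 2: most vertices on both sides are light.} We discard heavy vertices and look for a rainbow $\bicmplt t$ in two rounds. First we pick $x_1,\dots,x_t\in V_1$ so that the common neighbourhood $W$ is large and few $w\in W$ see a repeated colour among the edges $x_iw$. To do this, count pairs (tuple $\mathbf x\in N(w)^t$, vertex $w$). The total is $\sum_w\deg(w)^t\ge n(dn/4)^t$. Since $w$ is light, the number of bad pairs, meaning those with $\chi(x_iw)=\chi(x_jw)$ for some $i\ne j$, is at most $t^2\epsilon\sum_w\deg(w)^t$. By averaging, as in dependent random choice, there is a tuple of distinct vertices with $|W|\ge d^tn/8^t$ such that at most a $2t^2\epsilon$-fraction of $W$ is bad; we delete the bad part of $W$. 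Next we choose $w_1,\dots,w_t$ uniformly and independently from $W$ and bound the probability of each remaining type of monochromatic pair:
\begin{itemize}
\item Two edges at some $x_i$ with the same colour: since $x_i$ is light, for a fixed earlier $w$ the later choice hits one of at most $\epsilon dn$ vertices, so the probability is at most $\epsilon dn/|W|$.
\item Two disjoint edges $x_iw$ and $x_jw'$ with the same colour: for fixed $x_i,w,x_j$, again at most $\epsilon dn$ choices of $w'$ conflict, so the probability is at most $\epsilon dn/|W|$.
\item Collisions among the $w_l$ themselves: probability $O(t^2/|W|)$.
\end{itemize}
A union bound over $O(t^4)$ configurations gives total failure probability $O(t^4\epsilon d\,8^t/d^t)<1$ by the choice of $\epsilon$. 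Hence a rainbow $\bicmplt t$ exists.

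The main obstacle is Case 2. We cannot afford to count all copies of $\bicmplt t$, since that would cost $d^{t^2}$ and push $n$ to $(t/d)^{t^2}$. The remedy is to handle the ``adjacent at the right vertex'' conflicts through the averaging step that picks $x_1,\dots,x_t$, and all other conflicts conditionally on the $x_i$, where lightness gives a per-step bound of $\epsilon dn/|W|$. The hypothesis $d\le 1/(2^{10}t)$ is what I expect to be needed to make the cleaning and averaging constants work, and so keep $|W|\gg\epsilon dn\,t^4$.
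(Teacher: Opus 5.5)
\textbf{There is a genuine gap.} Your two cases make incompatible demands on the single threshold $\epsilon$, so the argument only gives $n\ge (1/d)^{\Theta(t^2)}$, not $(t/d)^{\gamma t}$.

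\emph{Case~1 fails with your $\epsilon$.} You apply Proposition~\ref{prop-kst} to $H$, whose density is only about $\epsilon d$, with $t$ in the exponent. The requirement $(\epsilon d/16)^t n\ge 2t^2$ with $\epsilon=(d/t)^{4t}$ means $n\gtrsim (t/d)^{4t^2}$. So the claim that it holds for $n\ge (t/d)^{\gamma t}$ is false.

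\emph{Changing $\epsilon$ does not help.} Case~2 forces $\epsilon$ to be exponentially small in $t$. The common neighbourhood $W$ of $x_1,\dots,x_t$ has size only about $(d/8)^t n$, while lightness bounds a colour class at $x_i$ only by $\epsilon\deg(x_i)\ge \epsilon dn/4$. Your union bound therefore needs $\epsilon\lesssim d^{t-1}8^{-t}t^{-4}$. But finding $\bicmplt t$ inside a graph of density $p=\epsilon d$ (by Proposition~\ref{prop-kst} or otherwise) needs $n\ge p^{-\Omega(t)}$ in general: random bipartite graphs of density $p$ with parts smaller than $p^{-t/2}$ typically contain no $\bicmplt t$. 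With $p=\epsilon d$ this is $(1/d)^{\Omega(t^2)}$. So the $d^{t^2}$ loss you named as the main obstacle has not been avoided; it has moved from Case~2 into Case~1.

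There are also smaller problems. After cleaning, degrees lie between $dn/4$ and $n$, not $\Theta(dn)$, so conflict sets have size $\epsilon n$ rather than $\epsilon dn$. And discarding heavy vertices in Case~2 can destroy most edges and the degree lower bounds, since fewer than half of the vertices may carry almost all the edges.

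\textbf{The missing idea} is never to require lightness at the scale of a $t$-fold common neighbourhood. The paper proceeds as follows.
\begin{itemize}
\item It uses the polynomial threshold $\delta=d^6$ (relative to $n$), so the popular case costs only $(\delta/4)^t$ in Proposition~\ref{prop-kst}.
\item In the bounded case it makes left vertices rainbow by sampling just $s=\delta^{-1/2}=d^{-3}$ vertices $B_1\subseteq B$. A left vertex with no $\delta$-popular colour colours $B_1$ in rainbow with probability at least $1/2$, because $\binom s2\delta\le 1/2$.
\item Proposition~\ref{prop-kst}, applied to the graph of density at least $d/8$ between the good left vertices and $B_1$, yields $A''$ with $|A''|\ge\delta^t n$ completely joined to a set $B''$ of $2t-1$ right vertices. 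Every $a\in A''$ colours $B''$ in rainbow.
\item Only then is popularity on the right side handled, locally and at threshold $t^{-5}$ relative to the current left set. A greedy procedure of at most $t$ steps, costing a factor $t^{-5t}$, either finds $t$ right vertices with popular colours, which are necessarily distinct and give a $\{2\}$-canonical $\bicmplt t$, or leaves a set $V$ of $t$ right vertices with no popular colour. In the latter case a random $t$-subset $U$ of the surviving left vertices gives a rainbow $\bicmplt t$ on $U\cup V$.
\end{itemize}
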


\begin{proof}
    We will show that $\gamma=6$ suffices. For simplicity, let $A=V_1$ and $B=V_2$. Let $\delta=d^{6}$ and let $\chi:E(G)\to\N$ be an arbitrary colouring of $G$. Suppose first that $\chi$ is not $(\delta,\{1\})$-bounded. Then there are at least $\delta n$ vertices in $A$ which have a $\delta$-popular colour. Now keep only those vertices in $A$ with a popular colour and for each such vertex keep only those incident edges which are coloured in this popular colour. Colour each of the remaining vertices in its popular colour and apply Proposition~\ref{prop-unif1}, giving $\sqrt{\delta n}$ vertices in $A$ whose popular colours are all the same or $\sqrt{\delta n}$ vertices in $A$ whose popular colours are all different. In either case, keep only those $\sqrt{\delta n}$ vertices in $A$. This leaves us with a bipartite graph with parts of size $\sqrt{\delta n}$ and $n$ and density at least $\delta$. Moreover, colouring of this graph is either $\emptyset$-canonical (that is, monochromatic) or $\{1\}$-canonical, so it suffices to find a copy of $\bicmplt{t}$ in it. But this is immediate by Proposition~\ref{prop-kst}, since $\frac{\delta}{4}\sqrt{\delta n}\ge 2t$ and $\left(\frac{\delta}{4}\right)^t n\ge 2t$ for large enough $t$.

    If $\chi$ is $(\delta,\{1\})$-bounded, remove all vertices with a $\delta$-popular colour from $A$. By definition, there are at most $\delta n<n/2$ such vertices, so that at most $\delta n^2$ edges are removed, leaving the density of at least $d-\delta\ge d/2$. If now there were fewer than $\frac{d}{8}n$ vertices in $A$ with degree at least $\frac{d}{4}n$, density would be smaller than $d/2$, so after removing all vertices from $A$ of degree less than $\frac{d}{4}n$ we are left with $A_1\subseteq A$ of size $|A_1|\ge\frac{d}{8}n$ with all vertices of degree at least $\frac{d}{4}n$ and no vertex with a $\delta$-popular colour.

    Pick $s=\delta^{-1/2}=d^{-3}$ vertices $b_1,\dots,b_s$ from $B$ uniformly and independently at random with repetition, denote this set by $B_1$ and denote by $A'$ the set of vertices $a\in A_1$ with at least $\frac{d}{8}s$ neighbours in $B_1$ and which colour $B_1$ in rainbow (i.e. such that all colours $\chi(ab)$ for $b\in B_1\cap N(a)$ are different). For any given $a\in A_1$, let $X_a$ denote the number of $b_i$'s adjacent to $a$, that is, $X_a=\sum_{i=1}^s \one(a\sim b_i)$, so that $\expect X_a=s\cdot\frac{\deg a}{n}\ge\frac{d}{4}s$. Now Chernoff's inequality gives us
    $$\prob\left[X_a<\frac{1}{2}\expect X_a\right]\le e^{-2(\frac{1}{2}\expect X_a)^2/s}\le e^{-\frac{d^2}{32}s}\le\frac{1}{4},$$
    for $t$ large enough. In particular, $\prob[X_a<\frac{d}{8}s]\le1/4$.
    
    Now for the second condition, take again arbitrary $a\in A_1$, and fix any colour $c\in\N$. By choice of $A_1$, $c$ is not $\delta$-popular for $a$, so for all $i=1,\dots,s$, we have $\prob[\chi(a,b_i)=c]\le\frac{\delta n}{n}=\delta$. Then for all $i\neq j$, $\prob[\chi(a,b_i)=\chi(a,b_j)]\le\delta$, whence the union bound gives
    $$\prob\left[\exists i\neq j,\chi(a,b_i)=\chi(a,b_j)\right]\le \binom{s}{2}\delta\le\frac{1}{2}$$
    by choice of $\delta$. In other terms, the probability that $a$ does not colour $B_1$ in rainbow is at most $1/2$.
    
    Call $a\in A_1$ ``good'' if it colours $B_1$ in rainbow and $X_a\ge\frac{d}{8}s$ and denote by $Y$ the number of good vertices in $A_1$. By the two previous paragraphs, $\expect Y\ge \frac{|A_1|}{4}\ge\frac{d}{32}n$. So there is a choice of $b_i$'s such that the number of good vertices in $A_1$ is at least $\frac{d}{32}n$, and fix one such choice. Note that if a vertex is good, then it must be in $A'$ by definition, since the fact that it colours $B_1$ in rainbow implies that all of its neighbours in $B_1$ are distinct (even though they were chosen with repetition). In particular, after removing vertices in $B_1$ without neighbours in $A'$ to get $B'$, we have $|A'|\ge \frac{d}{32}n$, $s\ge|B'|\ge \frac{d}{8}s$, and density of $G[A',B']$ is at least $d/8$.

    Since $\frac{d}{32}\cdot\frac{d}{8}s\ge 2(2t-1)$ and $\left(\frac{d}{32}\right)^{2t-1} \frac{d}{32}n\ge 2\delta^t n$ for $t$ large, we can apply Proposition~\ref{prop-kst} to $G[A',B']$ (with the order of $A'$ and $B'$ reversed) to get a complete subgraph on $A''\subset A'$ and $B''\subset B'$ of sizes $\delta^t n$ and $2t-1$, respectively, and such that every vertex in $A''$ colours $B''$ in rainbow. Set $S_0=A''$. While it is possible (and at most $t$ times), choose a (previously unchosen) vertex $b$ in $B''$ with a $t^{-5}$-popular colour $c$ in $G[S_i,B'']$ and restrict to the set of its $c$-coloured neighbours $S_{i+1}\subseteq S_i$. Let in the end $T\subseteq B''$ be the set of chosen vertices in $B''$ and $S\subseteq A''$ the restricted set of vertices in $A''$. 
    
    If $|T|=t$, then $|S|\ge t^{-5t}|A''|\ge t$ and $G[S,T]$ is $\{2\}$-canonically coloured (all popular colours of vertices in $T$ must be different as any $a\in S$ colours $T$ in rainbow). Otherwise, if $|T|<t$, then $|B''\setminus T|\ge t$, so fix arbitrary $V\subseteq B''\setminus T$ of size exactly $t$. Then in $G[S,V]$ there is no vertex in $V$ with a $t^{-5}$-popular colour. Now choose a subset $U\subset S$ of size $t$ uniformly at random. A pair of edges breaking the desired rainbow colouring of $G[U,V]$ can consist either of (1) a pair of disjoint edges coloured in the same colour or (2) a pair of edges incident at a vertex in $V$, again coloured in the same colour. Therefore the probability that $G[U,V]$ is not rainbow is at most
    $$\left[\binom{t}{2}|S|\cdot t^{-5}|S|+\frac{1}{2}t|S|\cdot t^{-5}|S|\right]\frac{t(t-1)}{|S|(|S|-1)}<\frac{1}{2}(t^{4-5}+t^{3-5})<1,$$
    where the first and the second term in the square brackets count the number of pairs of edges of type (1) and (2), respectively. So in this case there is a choice of $U$ leading to a rainbow-coloured $G[U,V]$, and irrespective of the size of $T$ one is able to find a canonical copy of $\bicmplt{t}$, completing the proof.
\end{proof}

This theorem will lead us to an upper bound on $\erado{3}{t}$ that matches the corresponding lower bound. For that, we need one more ingredient which enables us to extract a rainbow copy of $\cmplt{k}{t}$ from an appropriately bounded colouring of $\cmplt{k}{n}$.

\begin{proposition}[\cite{carvajal2024canonical}]\label{prop-bounded}
    For positive integers $k\ge 2$, $t$, $n$ satisfying $t\le n/2$, let $\chi:E(H)\to \N$ be a $\pmb{\delta}=(\delta_0,\dots,\delta_{k-1})$-bounded colouring of $H=\cmplt{k}{n}$ such that $\delta_j\le\frac{1}{2^{3k-j}t^{2k-j-1}}$ for every $j=0,\dots,k-1$. Then there is a rainbow copy of $\cmplt{k}{t}$ in $H$.
\end{proposition}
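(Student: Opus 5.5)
The plan is a random-sampling argument with deletion. We pick uniformly random subsets $U_i\subseteq V_i$ of size $2t$, which is possible since $t\le n/2$. We then bound the expected number of \emph{bad pairs}, i.e.\ unordered pairs of distinct edges inside $U_1\times\dots\times U_k$ that receive the same colour. Finally we destroy each bad pair by deleting one vertex from each, keeping at least $t$ vertices in every part. The resulting copy of $\cmplt{k}{t}$ has no two edges of equal colour, so it is rainbow.

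\emph{Classifying bad pairs.} Two distinct edges $e,f$ of $H$ agree on some index set $J\subsetneq[k]$, and their common part is a tuple $S=e\cap f\in V_J$. Fix $j=|J|$ and one of the $\binom{k}{j}$ choices of $J$. We count pairs $(e,f)$ with $\chi(e)=\chi(f)$ and $e\cap f=S\in V_J$ by splitting according to whether $S$ admits a $\delta_j$-popular colour.
\begin{itemize}
\item If $S$ admits a $\delta_j$-popular colour, then by $(\delta_j,J)$-boundedness there are at most $\delta_j n^{j}$ such tuples. Each carries at most $n^{2(k-j)}$ pairs, so these contribute at most $\delta_j n^{2k-j}$ pairs.
\item Otherwise, every edge $e\supseteq S$ has fewer than $\delta_j n^{k-j}$ partners $f\supseteq S$ of the same colour. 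Over all $n^j\cdot n^{k-j}$ choices of $(S,e)$, this contributes at most $\delta_j n^{2k-j}$ pairs.
\end{itemize}
Hence, for each $J$, there are at most $2\delta_j n^{2k-j}$ monochromatic pairs of type $J$.

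\emph{Expected number of bad pairs.} Such a pair spans exactly $2k-j$ vertices: one per index in $J$ and two per index outside $J$. The probability that all of them land in the random sets is at most $(2t/n)^{j}\bigl(\tfrac{2t(2t-1)}{n(n-1)}\bigr)^{k-j}\le (2t/n)^{2k-j}$. Therefore the expected number of bad pairs is at most
\[
\sum_{j=0}^{k-1}\binom{k}{j}\,2\delta_j\,(2t)^{2k-j}.
\]
Substituting $\delta_j\le 2^{-(3k-j)}t^{-(2k-j-1)}$, each term becomes $\binom kj 2^{1-k}t$, and the sum is at most $2t$. So there is a choice of the $U_i$ with at most $2t$ bad pairs.

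\emph{Deletion step.} Each bad pair contains two edges that differ in at least one coordinate $i\notin J$. We delete, from such a part $V_i$, a vertex of one of the two edges; this destroys that pair. The danger is that up to $2t$ deletions all fall in the same part, which would leave fewer than $t$ vertices there.

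I expect this constant bookkeeping to be the main obstacle. The bound $2t$ has exactly the right order, but it is off by a constant factor. My first attempt would be to sample sets of size $(1+\varepsilon)t$ for a suitable $\varepsilon$, or $2t$ with a sharper count that distributes the deletions among the $k-j$ free parts, so that the expected number of bad pairs drops below $t$. If the given thresholds $\delta_j$ are too tight for either fix, I would instead run a sequential version. That would choose the vertices one part at a time, and use boundedness at level $j$ to show that each newly chosen vertex creates few conflicts; this is consistent with the exponent $2k-j-1$ in the statement, which is one less than the number of vertices spanned by a pair.

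Whichever variant works, the core estimate is the count above: at most $2\delta_j n^{2k-j}$ monochromatic pairs of each type $J$, obtained from the popular/non-popular split of their common tuple $S$.
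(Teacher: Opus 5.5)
Your strategy is the paper's: random $2t$-subsets, the popular/non-popular split of the common tuple $S$, a first-moment bound, then deletion. As written, though, the argument does not close. You end with fewer than $2t$ bad pairs, and deleting up to $2t-1$ vertices from a single part of size $2t$ can leave one vertex. None of your proposed remedies is carried out. The first one cannot work with your count. With samples of size $xt$ and every $\delta_j$ at its allowed maximum, already for $k=2$ your expectation bound is $(x^4/32+x^3/8)t$. This must be at most $(x-1)t$, i.e.\ $x^4+4x^3\le 32(x-1)$, which has no solution.

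The missing point is a factor of $2$: both of your bullets count ordered pairs. A tuple $S$ carries at most $\binom{n^{k-j}}{2}\le\frac12 n^{2(k-j)}$ unordered pairs. Summing partners over $(S,e)$ counts each unordered pair $\{e,f\}$ twice, once from $e$ and once from $f$. So each $J$ contributes at most $\frac12\delta_j n^{2k-j}+\frac12\delta_j n^{2k-j}=\delta_j n^{2k-j}$, which is exactly the paper's estimate. The expectation then becomes $\sum_{J\subsetneq[k]}\delta_{|J|}(2t)^{2k-|J|}\le (2^k-1)t/2^k<t$. Deleting one vertex per bad pair leaves more than $t$ vertices in every part, even if all deletions hit the same part, and this finishes the proof.

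Your lossy bound would also suffice once you notice that a bad pair can be destroyed by deleting the vertex of $e$ in \emph{any} part $V_i$, not only in a free coordinate $i\notin J$, since every edge meets every part. Spreading at most $2t-1$ deletions as evenly as possible over $k\ge 2$ parts removes at most $\lceil (2t-1)/2\rceil=t$ vertices from each part.
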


We include the proof for the sake of completeness.

\begin{proof}[Proof of Proposition \ref{prop-bounded}]
    For $i=1,\dots,k$, independently take $W_i$ to be a subset of $V_i$ of size $2t$ chosen uniformly at random. For $J\subsetneq[k]$, let $X_J$ be the number of unordered pairs of edges $\{e,f\}$ in $H[W_1,\dots,W_k]$ such that $e\cap f\in W_J$ and $\chi(e)=\chi(f)$. We call such pairs ``$J$-obstructing''. Note that $H[W_1,\dots,W_k]$ is rainbow if and only if $X_J=0$ for all $J\subsetneq[k]$, i.e. there are no obstructing pairs. Now for any given $J\subsetneq[k]$, if $m_J$ denotes the number of tuples in $V_J$ with a $\delta_{|J|}$-popular colour, then the number of potential $J$-obstructing pairs $\{e,f\}$ which could appear in $H$ is at most
    \begin{align*}
        &m_J\binom{|V_{[k]\setminus J}|}{2}+(|V_J|-m_J)\cdot\frac{1}{2}\delta_{|J|}|V_{[k]\setminus J}|^2 \\
        \le{} &\frac{1}{2}|V_{[k]\setminus J}|^2\left(\delta_{|J|}|V_J|+(1-\delta_{|J|})m_J\right) \\
        \le{} &\frac{\delta_{|J|}}{2}|V_J||V_{[k]\setminus J}|^2(2-\delta_{|J|}) \\
        \le{} &\delta_{|J|}n^{2k-|J|},
    \end{align*}
    where for the second inequality we used the fact that $m_J\le \delta_{|J|}|V_J|$, following from the definition of $\delta_{|J|}$-boundedness. But for any potential $J$-obstructing pair $\{e,f\}$, the probability that it is actually $J$-obstructing is $\left(\frac{2t}{n}\right)^{|J|}\left(\frac{2t(2t-1)}{n(n-1)}\right)^{k-|J|}\le \left(\frac{2t}{n}\right)^{2k-|J|}$, so for the expectation of $X_J$ we get
    $$\expect X_J\le \delta_{|J|}n^{2k-|J|}\left(\frac{2t}{n}\right)^{2k-|J|}=(2t)^{2k-|J|}\delta_{|J|}\le \frac{t}{2^k},$$
    whence $\expect\left[\sum_{J\subsetneq[k]}X_J\right]\le t$. Consequently, we can choose $W_i$'s such that the number of obstructing pairs is at most $t$, so after removing one vertex for each obstructing pair, all $W_i$'s still have size at least $t$ and the resulting subgraph of $H$ is rainbow, completing the proof. 
\end{proof}

We are now ready to give the upper bound on $\erado{3}{t}$ promised by Theorem \ref{thm-tri-erado}.

\begin{proof}[Proof of Theorem \ref{thm-tri-erado}]\sloppy
    Let $\delta=t^{-24}$, $n=t^{151t^2}$, $H=\tricmplt{n}$, and let $\chi:E(H)\to\N$ be an arbitrary colouring of $H$. For simplicity, let us denote by $A,B,C$ the vertex parts $V_1,V_2,V_3$, respectively. Suppose first that $\chi$ is $(\delta,2)$-bounded. Then it is also $\left((8\delta)^{1/4},(2\delta)^{1/2},\delta\right)$-bounded, so an application of Proposition~\ref{prop-bounded} leads to a rainbow copy of $\tricmplt{t}$ in $H$.

    On the other hand, if $\chi$ is not $(\delta,2)$-bounded, then without loss of generality there are at least $\delta n^2$ pairs in $A\times B$ with a $\delta$-popular colour. Let $E'\subseteq A\times B$ denote the subset of such pairs. Then we can define an auxiliary bipartite graph $H'$ on vertex parts $C$ and $E'$, with an edge between $c\in C$ and $(a,b)\in E'$ if and only if $(a,b,c)$ is coloured in the popular colour of $(a,b)$ in $H$. This auxiliary graph has edge density at least $\delta$, so owing to $\frac{\delta}{4}n\ge 2t$ and $(\frac{\delta}{4})^t \delta n^2 \ge 2t^{-25t} n^2$ for large $t$, an application of Proposition~\ref{prop-kst} gives $C''\subseteq C$ and $E''\subseteq E'$ such that $H'[C'',E'']$ is a copy of $\bicmpltt{t}{t^{-25t}n^2}$. Note that vertices of $E''$ in $H'$ are actually pairs from $A \times B$, so we can define a bipartite graph $G$ on parts $A$ and $B$ and edge-set $E''$ of density $t^{-25t}$. We can then colour edges of this graph based on their popular colours in $H$, so it suffices to find a canonical copy of $\bicmplt{t}$ in $G$. But this is direct by Theorem~\ref{thm-bip-turan-erado} which is available due to $t^{-25t}\le 1/(2^{10}t)$ and $n\ge t^{151t^2}\ge (\frac{t}{t^{-25t}})^{6t}$, as long as $t$ is large enough.
\end{proof}

\section{Concluding remarks}\label{sec-concl}

The most important question left open by our paper is whether one can match the lower bound $\erado{k}{t}\ge t^{(1-o(1))t^{k-1}}$ for $k$-partite canonical Ramsey numbers up to a constant in the exponent for all $k\ge 2$. This paper has answered this question in the affirmative for $k=3$, in addition to the $k=2$ case which was already known from the work of Dob\'ak and Mulrenin \cite{dobak2026sharp} and, independently, Gishboliner, Milojevi\'c, Sudakov, and Wigderson \cite{gishboliner2025canonical}. However, for $k\ge 4$ the question remains wide open. In fact, our methods (and in particular Theorem \ref{thm-bip-turan-erado}) only seem to be able to improve the upper bound $\erado{k}{t}\le t^{t^{\binom{k}{2}}}$ due to Carvajal, Santos, and Schacht \cite{carvajal2024canonical} by $-1$ in the degree of $t$ in the exponent, i.e. to $\erado{k}{t}\le t^{O\big(t^{\binom{k}{2}-1}\big)}$. For example, for $k=4$ one could obtain $\erado{4}{t}\le t^{\gamma t^5}$ for some $\gamma>0$, which is still far from the desired $t^{t^3}$.

On a more positive note, it is possible that generalising the Tur\'an-type bound of Theorem \ref{thm-bip-turan-erado} to larger uniformities is the right way to proceed. More precisely, we ask the following question.

\begin{question}\label{qst-turan}
    Is it true that for every integer $k\ge 1$ there exists a constant $\gamma=\gamma(k)>0$ such that for every sufficiently large integer $t$, every colouring of a $k$-uniform $k$-partite hypergraph $H$ of density $d>0$ and vertex parts of size at least $(t/d)^{\gamma t^{k-1}}$ admits a canonical copy of $\cmplt{k}{t}$?
\end{question}

Note that the desired upper bound would easily follow for all $k\ge 2$ from an affirmative answer to this question. For $k=1$, the statement of Question \ref{qst-turan} easily holds by the pigeonhole principle, and for $k=2$ it is exactly Theorem \ref{thm-bip-turan-erado}.

On the other hand, in our study of off-diagonal Erd\H{o}s-Rado numbers $\offerado{a}{b}{c}$, we were able to conclude that each of the two bounds of Proposition \ref{prop-offerado-general} for $a,c\ge 3$, $b\ge4$, can be made sharp (up to a constant in the exponent) in different growth regimes of $a,b,c$. We wonder if our methods can be extended to improve either of the bounds of (\ref{eq-lef-rodl-bounds}), or at least give a better understanding of the problem. In this regard, the study of off-diagonal Erd\H{o}s-Rado numbers in hypergraphs might as well turn out to be of some interest. We are not aware, however, of any progress in this direction.

\end{document}